\numberwithin{figure}{section}
\numberwithin{equation}{section}
\title{Jones--Wenzl projections and Dyck tilings: type $A$ and $B$}
\author[K.~Shigechi]{Keiichi~Shigechi}
\email{k1.shigechi AT gmail.com}
\date{\today}
\newcommand\tikzpic[2]{
\raisebox{#1\totalheight}{
\begin{tikzpicture}
#2
\end{tikzpicture}
}}
\newtheorem{theorem}[figure]{Theorem}
\newtheorem{example}[figure]{Example}
\newtheorem{lemma}[figure]{Lemma}
\newtheorem{defn}[figure]{Definition}
\newtheorem{prop}[figure]{Proposition}
\newtheorem{remark}[figure]{Remark}
\begin{document}

\begin{abstract}
The Jones--Wenzl projections are a special class of elements of the Temperley--Lieb 
algebra. We prove that the coefficient appearing in the Jones--Wenzl projection
is given by a generating function of combinatorial objects, called Dyck tilings.
We also show that this correspondence holds for type $B$ case.
\end{abstract}

\maketitle

\section{Introduction}
\subsection{Overview}
The Temperley--Lieb algebra first appeared in \cite{TL71}, and independently 
in \cite{Jon83,Jon89}.
Since then, it is applied to various research fields such as conformal field theory, 
statistical physics, and knot invariants. 
We consider the special elements of the Temperley--Lieb algebras of type $A$ and $B$, called 
the Jones--Wenzl projections.
The Jones--Wenzl projections are characterized by recurrence relations introduced by Wenzl in
\cite{Wen87}.
This recurrence formula is simplified by Morrison in \cite{Mor17}.
By use of the recurrence formula by Morrison, 
we prove that a coefficient appearing in the Jones--Wenzl projection can be expressed 
as the generating function of combinatorial objects, called Dyck tilings.
A Dyck tiling was introduced in \cite{SZJ12} to study the Kazhdan--Lusztig polynomials
for the Grassmannian permutations, and independently in \cite{KW11} to study 
the double-dimer model and spanning trees.
Variants of Dyck tilings are studied in \cite{JVK16,K12,KMPW12,S17,S19,S20}, and appear in connection with 
different contexts such as fully packed loop models \cite{FN12}, multiple Schramm--Loewner 
evolutions \cite{PelWu19,Pon18}, and the intersection cohomology of Grassmannian Schubert varieties \cite{Pa19}. 

In this note, we consider only cover-inclusive Dyck tilings above a Dyck path and below the 
top Dyck path. ``Cover-inclusive" means that the sizes of Dyck tiles are weakly decreasing 
from bottom to top.
We connect the coefficients in the Jones--Wenzl projections with Dyck tilings,
hence give a non-recursive and combinatorial interpretation to the coefficients. 
The key observation for this connection is that the recurrence relation by Morrison 
is satisfied by the generating function of Dyck tilings.
We have two types of Dyck tiles in a Dyck tiling: trivial Dyck tiles and non-trivial ones.
The existence of non-trivial Dyck tiles in a Dyck tiling absorbs the recursiveness 
of the formula by Morrison.

We also consider the Jones--Wenzl projections for the Temperley--Lieb algebra of type 
$B$. We generalize the recurrence relations by Wenzl \cite{Wen87} and by Morrison \cite{Mor17} 
to the type $B$ case, and give a combinatorial interpretation to the coefficients by use of Dyck tilings.
There are two main differences between type $A$ and type $B$.
First, the Temperley--Lieb algebra of type $B$ has a ``dot" in its pictorial 
presentation, and the existence of a dot imposes a certain condition on the sizes 
of Dyck tiles.
Secondly, we need an overall factor for the generating function depending on the number 
of dots in the pictorial presentation.

In Section \ref{sec:TLJW}, we introduce the Temperley--Lieb algebra and briefly review the 
recurrence formulae by Wenzl \cite{Wen87} and by Morrison \cite{Mor17}.
We introduce a notion of cover-inclusive Dyck tilings in Section \ref{sec:DT}, and 
prove that the generating function of Dyck tilings is equal to the coefficients 
in the Jones--Wenzl projections by choosing an appropriate weight for a Dyck tile.
In Section \ref{sec:TLB}, we study the Jones--Wenzl projections for the Temperley--Lieb 
algebra of type $B$. We obtain the recurrence formulae which are the generalizations 
of the recurrence relations by Wenzl \cite{Wen87} and by Morrison \cite{Mor17}.
In Section \ref{sec:DTB}, we introduce the generating function of Dyck tilings 
for type $B$, and prove that the correspondence between a coefficient in the Jones--Wenzl projection
and a generating function of Dyck tilings holds for the type $B$ case.

\subsection{Notations}
The quantum integers are given by $[n]:=(q^{n}-q^{-n})/(q-q^{-1})$.
Similarly, we define 
\begin{align*}
[n]_s:=
\begin{cases}
1, & \text{ if } n=0, \\
q^{n-1}s+q^{-(n-1)}s^{-1}, & \text{ if } n\ge1.
\end{cases}
\end{align*}
These satisfy the relation $[2][n]_s=[n+1]_s+[n-1]_s$ for $n\ge2$.

\section{Temperley--Lieb algebras and Jones--Wenzl projections}
\label{sec:TLJW}
\subsection{Temperley--Lieb algebras}
The {\it Temperley--Lieb algebra} $\mathcal{TL}^{A}_{n}$ of type $A$ is the unital associative 
$\mathbb{C}(q)$-algebra generated by $\{e_1,\ldots,e_{n-1}\}$ 
satisfying the relations:
\begin{align*}
&e_{i}^2=-[2]e_{i},\qquad 1\le i\le n-1, \\
&e_{i}e_{i+1}e_{i}=e_{i}, \qquad 1\le i\le n-2,\\
&e_{i+1}e_{i}e_{i+1}=e_{i+1},\qquad 1\le i\le n-2, \\
&e_{i}e_{j}=e_{j}e_{i}, \qquad |i-j|>1.
\end{align*} 
The Temperley--Lieb algebra has a diagrammatic representation.
For this, we depict the generator $e_{i}$ by 
\begin{align*}
e_{i}=
\tikzpic{-0.5}{[xscale=0.8]
\draw(0,0)node[anchor=north]{$1$} to (0,1);
\draw(2,0)node[anchor=north]{$i-1$}to (2,1);
\draw(1,0.5)node{$\cdots$};
\draw(3.2,0)node[anchor=north]{$i$}..controls(3.2,0.5)and(4.2,0.5)..(4.2,0)node[anchor=north]{$i+1$};
\draw(3.2,1)..controls(3.2,0.5)and(4.2,0.5)..(4.2,1);
\draw(5.4,0)node[anchor=north]{$i+2$}--(5.4,1);
\draw(7.4,0)node[anchor=north]{$n$}--(7.4,1);
\draw(6.4,0.5)node{$\cdots$};
}
\end{align*} 
and the unit $\mathbf{1}$ is depicted as the diagram consisting of $n$ vertical strands without cap-cup pairs.
The $n$ strand Temperley--Lieb diagram is a diagram  with $n$ strands such that 
it has $n$ marked points on the top and the bottom, arcs joining these marked points are non-intersecting.
We consider isotopic diagrams are equivalent.
An example of a diagram is depicted in Figure \ref{fig:TLD1}.
\begin{figure}[ht]
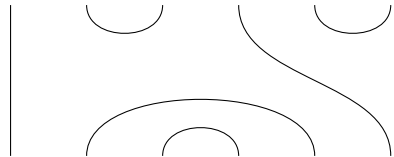

\tikzpic{-0.5}{
\draw(0,0)--(0,2);
\draw(1,0)..controls(1,1)and(4,1)..(4,0);
\draw(2,0)..controls(2,0.5)and(3,0.5)..(3,0);
\draw(1,2)..controls(1,1.5)and(2,1.5)..(2,2);
\draw(4,2)..controls(4,1.5)and(5,1.5)..(5,2);
\draw(3,2)..controls(3,1)and(5,1)..(5,0);
}
\caption{A Temperley--Lieb diagram with six strands}
\label{fig:TLD1}
\end{figure}

Let $D_1$ and $D_{2}$ be two $n$ strand Temperley--Lieb diagrams.
Then, the multiplication $D_1D_2$ of $D_{1}$ and $D_{2}$
is calculated by placing $D_1$ on top of $D_2$. 
If closed loops appear in the product, we remove each of which and 
giving a factor $-[2]$.

\begin{example}
Let $D$ be a diagram in Figure \ref{fig:TLD1}.
Then $De_{3}e_{5}$ is given by 
\begin{align*}
\tikzpic{-0.5}{[scale=0.7]
\draw(0,0)--(0,2);
\draw(1,0)..controls(1,1)and(4,1)..(4,0);
\draw(2,0)..controls(2,0.5)and(3,0.5)..(3,0);
\draw(1,2)..controls(1,1.5)and(2,1.5)..(2,2);
\draw(4,2)..controls(4,1.5)and(5,1.5)..(5,2);
\draw(3,2)..controls(3,1)and(5,1)..(5,0);
\draw[gray](-0.5,0)--(5.5,0);
\draw(0,0)--(0,-2)(1,0)--(1,-2)(4,0)--(4,-1)(5,0)--(5,-1);
\draw(2,0)..controls(2,-0.5)and(3,-0.5)..(3,0);
\draw(2,-1)..controls(2,-0.5)and(3,-0.5)..(3,-1);
\draw[gray](-0.5,-1)--(5.5,-1);
\draw(2,-1)--(2,-2)(3,-1)--(3,-2);
\draw(4,-1)..controls(4,-1.5)and(5,-1.5)..(5,-1);
\draw(4,-2)..controls(4,-1.5)and(5,-1.5)..(5,-2);
}
=-[2]
\tikzpic{-0.5}{[scale=0.7]
\draw(0,0)--(0,-2);
\draw(1,0)..controls(1,-0.5)and(2,-0.5)..(2,0);
\draw(4,0)..controls(4,-0.5)and(5,-0.5)..(5,0);
\draw(2,-2)..controls(2,-1.5)and(3,-1.5)..(3,-2);
\draw(4,-2)..controls(4,-1.5)and(5,-1.5)..(5,-2);
\draw(3,0)..controls(3,-0.7)and(1,-1.3)..(1,-2);
}
\end{align*}
\end{example}

\subsection{Jones--Wenzl projection}
In $\mathcal{TL}^{A}_{n}$, there are a special class of elements which we call 
{\it Jones--Wenzl projections}. 
We denote it by $P^{(n)}$ for $n\ge1$.
The projection $P^{(n)}$ is characterized by the following properties:
\begin{align*}
&P^{(n)}\neq0, \\
&P^{(n)}P^{(n)}=P^{(n)}, \\
&e_{i}P^{(n)}=P^{(n)}e_i=0, \quad \forall i\in\{1,2,\ldots,n-1\}.
\end{align*}
The projection is a linear combination of all $n$ strand Temperley--Lieb 
diagrams.
Especially, the coefficient of the identity diagram in $P^{(n)}$ is 
always $1$.
Further, the projection which satisfies the properties above is unique.

In \cite{Wen87}, the recurrence relation for $P^{(n)}$ is given.
\begin{prop}[\cite{Wen87}]
\label{prop:W}
The Jones--Wenzl projection satisfies the following recurrence relation
\begin{align*}
P^{(n+1)}=P^{(n)}+\genfrac{}{}{}{}{[n]}{[n+1]}P^{(n)}e_{n}P^{(n)},
\end{align*}
with the initial condition $P^{(1)}=\mathbf{1}$.
\end{prop}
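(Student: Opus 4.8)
The plan is to invoke the uniqueness of the Jones--Wenzl projection recalled above: it suffices to produce an element $Q\in\mathcal{TL}^{A}_{n+1}$ that satisfies $Q\neq0$, $Q^2=Q$, $e_iQ=Qe_i=0$ for all $i\in\{1,\ldots,n\}$, and whose identity coefficient is $1$, and then to conclude $Q=P^{(n+1)}$. I would set
\begin{equation*}
Q:=P^{(n)}+\frac{[n]}{[n+1]}P^{(n)}e_nP^{(n)},
\end{equation*}
where $P^{(n)}$ is viewed inside $\mathcal{TL}^{A}_{n+1}$ as the Jones--Wenzl projection on the first $n$ strands together with a vertical $(n+1)$-st strand; this embedding is legitimate because $P^{(n)}$ is built from $e_1,\ldots,e_{n-1}$ alone. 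The base case $P^{(1)}=\mathbf{1}$ is given.

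First I would dispatch the easy properties. Every diagram occurring in $P^{(n)}e_nP^{(n)}$ factors through $e_n$ and hence has at most $n-1$ through-strands, so it cannot be the identity diagram; since the identity coefficient of $P^{(n)}$ is $1$, the identity coefficient of $Q$ is $1$, and in particular $Q\neq0$. For $1\le i\le n-1$ the defining relations $e_iP^{(n)}=P^{(n)}e_i=0$ give $e_iQ=e_iP^{(n)}+\frac{[n]}{[n+1]}e_iP^{(n)}e_nP^{(n)}=0$, and symmetrically $Qe_i=0$, because each summand has $P^{(n)}$ adjacent to the offending $e_i$.

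The substance lies in the relation $e_nQ=Qe_n=0$ together with idempotency, and both reduce to a single partial-closure identity. Diagrammatically, sandwiching $P^{(n)}$ between two copies of $e_n$ closes the $n$-th strand of the underlying projection through the $(n+1)$-st strand, which is precisely the partial (Markov) trace over the last strand. I would establish
\begin{equation*}
e_nP^{(n)}e_n=-\frac{[n+1]}{[n]}\,P^{(n-1)}e_n,
\end{equation*}
the scalar being the ratio of quantum dimensions $\Delta_m=(-1)^m[m+1]$ attached to the loop value $-[2]$. Granting this, since $P^{(n-1)}$ commutes with $e_n$ and $P^{(n)}e_j=0$ for $j\le n-1$ forces $P^{(n)}P^{(n-1)}=P^{(n-1)}P^{(n)}=P^{(n)}$, one obtains $e_nP^{(n)}e_nP^{(n)}=-\frac{[n+1]}{[n]}e_nP^{(n)}$ and $P^{(n)}e_nP^{(n)}e_nP^{(n)}=-\frac{[n+1]}{[n]}P^{(n)}e_nP^{(n)}$. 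The former gives $e_nQ=e_nP^{(n)}-e_nP^{(n)}=0$ (and $Qe_n=0$ by symmetry); the latter, substituted into the expansion of $Q^2$ via $P^{(n)}P^{(n)}=P^{(n)}$, makes the cross terms cancel, yielding $Q^2=Q$.

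I expect the partial-trace identity to be the main obstacle: proving $e_nP^{(n)}e_n=-\frac{[n+1]}{[n]}P^{(n-1)}e_n$ directly is delicate, and a naive induction that already uses the recurrence would be circular. I would resolve this by a simultaneous induction on $n$, carrying the recurrence for $P^{(n)}$ and the partial-trace formula together, or alternatively by evaluating the closure purely within the Temperley--Lieb diagram calculus. The bookkeeping of signs forced by the convention $e_i^2=-[2]e_i$ is exactly where the care is required.
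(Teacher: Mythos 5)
Your proposal is correct, and there is nothing in the paper to compare it against: the paper states Proposition~\ref{prop:W} without proof, citing \cite{Wen87}, so the relevant benchmark is the classical argument --- which is exactly what you reconstruct. Setting $Q:=P^{(n)}+\frac{[n]}{[n+1]}P^{(n)}e_nP^{(n)}$, checking the defining properties, and invoking the uniqueness stated just before the proposition is the standard route, and all of your reductions are sound under the convention $e_i^2=-[2]e_i$: the through-strand count shows $P^{(n)}e_nP^{(n)}$ contains no identity diagram, the absorption $P^{(n-1)}P^{(n)}=P^{(n)}P^{(n-1)}=P^{(n)}$ holds because every non-identity diagram in $P^{(n-1)}$ factors through some $e_j$ with $j\le n-2$, and your partial-trace identity has the right scalar (check $n=1$: $e_1P^{(1)}e_1=e_1^2=-[2]e_1=-\frac{[2]}{[1]}P^{(0)}e_1$). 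The one step you flag as the main obstacle closes more easily than you fear, and precisely by the simultaneous induction you propose: once the recurrence is established at level $n$, the trace identity at level $n+1$ is a one-line computation. Since $P^{(n)}$ commutes with $e_{n+1}$ and $e_{n+1}e_ne_{n+1}=e_{n+1}$,
\begin{align*}
e_{n+1}P^{(n+1)}e_{n+1}
&=P^{(n)}e_{n+1}^{2}+\genfrac{}{}{}{}{[n]}{[n+1]}P^{(n)}e_{n+1}e_{n}e_{n+1}P^{(n)}\\
&=\left(-[2]+\genfrac{}{}{}{}{[n]}{[n+1]}\right)P^{(n)}e_{n+1}
=-\genfrac{}{}{}{}{[n+2]}{[n+1]}P^{(n)}e_{n+1},
\end{align*}
where the last equality uses $[2][n+1]=[n+2]+[n]$ and $P^{(n)}e_{n+1}P^{(n)}=P^{(n)}P^{(n)}e_{n+1}=P^{(n)}e_{n+1}$. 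So the induction hypothesis (recurrence plus trace identity at level $n$) yields both statements at level $n+1$; there is no circularity and no gap in your argument.
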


First few projections are 
\begin{align*}
P^{(2)}&=\mathbf{1}+\genfrac{}{}{}{}{1}{[2]}e_{1}, \\
P^{(3)}&=\mathbf{1}+\genfrac{}{}{}{}{[2]}{[3]}(e_1+e_2)
+\genfrac{}{}{}{}{1}{[3]}\left(e_2e_1+e_1e_2\right), \\
&=
\tikzpic{-0.5}{[xscale=0.4,yscale=0.6]
\draw(0,0)--(0,2)(1,0)--(1,2)(2,0)--(2,2);
}
+\genfrac{}{}{}{}{[2]}{[3]}
\left(
\tikzpic{-0.5}{[xscale=0.4,yscale=0.6]
\draw(0,0)..controls(0,1)and(1,1)..(1,0);
\draw(0,2)..controls(0,1)and(1,1)..(1,2);
\draw(2,0)--(2,2);
}
+
\tikzpic{-0.5}{[xscale=0.4,yscale=0.6]
\draw(0,0)--(0,2);
\draw(1,0)..controls(1,1)and(2,1)..(2,0);
\draw(1,2)..controls(1,1)and(2,1)..(2,2);
}\right)
+\genfrac{}{}{}{}{1}{[3]}\left(
\tikzpic{-0.5}{[xscale=0.4,yscale=0.6]
\draw(0,0)..controls(0,1)and(1,1)..(1,0);
\draw(1,2)..controls(1,1)and(2,1)..(2,2);
\draw(0,2)..controls(0,1)and(2,1)..(2,0);
}
+\tikzpic{-0.5}{[xscale=0.4,yscale=0.6]
\draw(1,0)..controls(1,1)and(2,1)..(2,0);
\draw(0,2)..controls(0,1)and(1,1)..(1,2);
\draw(2,2)..controls(2,1)and(0,1)..(0,0);
}
\right).
\end{align*}

The recurrence formula in Proposition \ref{prop:W} is further 
simplified in \cite{Mor17}.
Below, we briefly recall the simplified formula following \cite{Mor17}.
To state the formula, we introduce a family of diagrams 
$g_{n,i}$, $1\le i\le n$.
We denote by $g_{n,i}$, $1\le i\le n-1$, the diagram such that 
it has a single cup at the top right, a single cap at the $i$-th position from left 
at the bottom, and $n-2$ vertical strands which connect a point at top with a point at bottom.
We define $g_{n,n}:=\mathbf{1}$, {\it i.e.}, $g_{n,n}$ is the identity diagram.  

\begin{defn}
A coefficient of a diagram $h$ in $P^{(n)}$ is denoted by 
$\mathrm{Coeff}^{(n)}(h)$.
\end{defn}

\begin{lemma}[Proposition 3.3 in \cite{Mor17}]
\label{lemma:Mor}
The coefficient of $g_{n,i}$ in $P^{(n)}$ is given by
\begin{align*}
\mathrm{Coeff}^{(n)}(g_{n,i})=\genfrac{}{}{}{}{[i]}{[n]}.
\end{align*}
The Wenzl's formula is reduced to 
\begin{align*}
P^{(n+1)}=\genfrac{}{}{}{}{P^{(n)}}{[n+1]}\left(\sum_{i=1}^{n+1}[i]g_{n+1,i}\right).
\end{align*}
\end{lemma}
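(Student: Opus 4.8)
The plan is to treat the two assertions separately: first establish the closed form $\mathrm{Coeff}^{(n)}(g_{n,i})=[i]/[n]$, and then feed it into Wenzl's recurrence (Proposition \ref{prop:W}) to obtain the reduced formula. Throughout I would use the factorization $g_{n,i}=e_{n-1}e_{n-2}\cdots e_i$ for $1\le i\le n-1$ (with $g_{n,n}=\mathbf 1$), from which one reads off the identities $g_{n,i-1}=g_{n,i}e_{i-1}$ and $g_{n+1,i}=e_ng_{n,i}$; these let me pass between the $g$-diagrams by multiplying by a single generator.

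For the coefficient formula, write $c_i:=\mathrm{Coeff}^{(n)}(g_{n,i})$ and exploit the defining relation $P^{(n)}e_i=0$ for $1\le i\le n-1$. Extracting the coefficient of $g_{n,i}$ from $P^{(n)}e_i=0$, the key point is that right multiplication by $e_i$ does not alter the top boundary of a diagram, so only diagrams whose top is the single arc $(n-1,n)$ — that is, the $g_{n,m}$ — can contribute to $g_{n,i}$. A short computation with the Temperley--Lieb relations then shows that $g_{n,m}e_i$ equals $g_{n,i}$ precisely for $m\in\{i-1,i+1\}$ and equals $-[2]\,g_{n,i}$ for $m=i$ (using $e_ie_{i-1}e_i=e_i$ and $e_i^2=-[2]e_i$), while every other $m$ produces a non-$g$ diagram. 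This yields the three-term recurrence $c_{i-1}-[2]c_i+c_{i+1}=0$, which is exactly the recurrence satisfied by the quantum integers; solving it with the boundary data $c_0=0$ and $c_n=\mathrm{Coeff}^{(n)}(\mathbf 1)=1$ forces $c_i=[i]/[n]$.

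For the reduced formula, I would isolate the structural lemma that $P^{(n)}e_nD=0$ for every $n$-strand diagram $D$ that is not some $g_{n,i}$. This rests on two observations: first, $P^{(n)}E=0$ whenever $E$ has a top cap joining adjacent points $j,j+1$ with $j\le n-1$, since then $e_jE=-[2]E$ and $P^{(n)}e_j=0$; second, $e_nD$ inherits exactly the top caps of $D$ among the first $n-1$ points (plus the new cap $(n,n+1)$), because $e_n$ only reroutes the $n$-th top strand of $D$. Since planarity guarantees that the innermost top arc of any diagram is adjacent, a diagram $D$ avoiding such a cap after multiplication by $e_n$ can have at most the single top arc $(n-1,n)$, hence must be one of the $g_{n,i}$. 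Granting this, expanding the right-hand $P^{(n)}$ in $P^{(n)}e_nP^{(n)}$ and discarding the vanishing terms gives $[n]\,P^{(n)}e_nP^{(n)}=P^{(n)}e_n\sum_{i=1}^n[i]\,g_{n,i}$ through the coefficient formula; combined with $g_{n+1,i}=e_ng_{n,i}$, the isolated term $g_{n+1,n+1}=\mathbf 1$, and Wenzl's recurrence, this rearranges into the claimed identity $P^{(n+1)}=\tfrac{P^{(n)}}{[n+1]}\sum_{i=1}^{n+1}[i]g_{n+1,i}$.

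The main obstacle I anticipate is the planar bookkeeping underlying both halves: correctly enumerating the diagrams $D$ with $De_i=g_{n,i}$ (it is easy to overlook the $m=i-1$ preimage coming from $e_ie_{i-1}e_i=e_i$) and, more substantially, proving the structural lemma, where one must argue carefully — using the innermost-arc property of non-crossing diagrams together with the precise way $e_n$ reroutes the top boundary — that no diagram other than the $g_{n,i}$ survives left multiplication by $P^{(n)}e_n$. Once these diagrammatic facts are pinned down, the remaining work reduces to the quantum-integer recurrence and a direct substitution into Proposition \ref{prop:W}.
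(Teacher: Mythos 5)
Your proposal is correct, but it does not follow the route the paper takes. The paper itself states Lemma \ref{lemma:Mor} without proof, citing Morrison; its own version of the argument appears in the proof of the type $B$ analogue (Eqs. (\ref{eq:gn})--(\ref{rec:Q2})), which runs as follows: decompose the span of diagrams as $\mathcal{K}_n\oplus\mathcal{K}_n^{\perp}$, show $Q^{(n)}e_n\mathcal{K}_n^{\perp}=0$ by factoring any non-$g$ diagram as $e_jh$ with $j\le n-2$ and commuting $e_j$ past $e_n$, then substitute into Wenzl's recurrence and prove the coefficient formula by induction on $n$, using that $h\,g_{n+1,i}$ is a $g$-diagram only when $h=\mathbf{1}$. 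Your second half is essentially this same argument (your structural lemma, proved via innermost top arcs, is the paper's $e_jh$ factorization in diagrammatic language), but your first half is genuinely different: you pin down $c_i=\mathrm{Coeff}^{(n)}(g_{n,i})$ at \emph{fixed} $n$ directly from the annihilation axioms $P^{(n)}e_i=0$, obtaining the three-term recurrence $c_{i-1}-[2]c_i+c_{i+1}=0$ (correctly accounting for the $m=i-1$ preimage via $e_ie_{i-1}e_i=e_i$ and the loop factor $-[2]$ at $m=i$) and solving with $c_0=0$, $c_n=1$. This buys a self-contained, non-inductive derivation of the coefficients that uses only the defining properties of $P^{(n)}$, whereas the paper's inductive sweep obtains the coefficients and the reduced recursion simultaneously and transfers verbatim to type $B$, where the extra diagram $g_{n,0}$ and the relation $e_1e_0e_1=(qs^{-1}+q^{-1}s)e_1$ would perturb your boundary condition $c_0=0$. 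Two small points to tighten: the phrase ``right multiplication by $e_i$ does not alter the top boundary'' is an overstatement --- $De_i$ can acquire \emph{new} top arcs when two through-strands are capped; what your argument actually needs, and what is true, is that every top arc of $D$ persists in $De_i$, so a contributor to $g_{n,i}$ has top arcs contained in $\{(n-1,n)\}$ and hence is some $g_{n,m}$ (including $\mathbf{1}=g_{n,n}$). And you should say explicitly that $c_0=0$ encodes the absence of a diagram $g_{n,0}$ in type $A$, so the relation at $i=1$ is the two-term identity $c_2=[2]c_1$.
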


By use of Lemma \ref{lemma:Mor}, we have the following recurrence formula 
for the coefficients of diagrams.
An innermost cap in $n+1$ strand Temperley--Lieb diagram is a cap which 
connects the $i$-th and $i+1$-th bottom vertices or a vertical strand 
which connects the right-most vertices in the top and bottom points.
\begin{prop}[Proposition 4.1 in \cite{Mor17}]
Let $D$ be a diagram in $n+1$ strand Temperley--Lieb diagrams.
Let $\{i\}\in[1,n+1]$ be the set of positions of innermost caps in $D$, and 
$D_{i}$ be the diagram obtained from $D$ by removing the $i$-th 
innermost cap.
Then, 
\begin{align}
\label{eq:recA}
\mathrm{Coeff}^{(n+1)}(D)=
\sum_{\{i\}}\genfrac{}{}{}{}{[i]}{[n+1]}
\mathrm{Coeff}^{(n)}(D_{i}).
\end{align}  
\end{prop}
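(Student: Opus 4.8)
The plan is to derive \eqref{eq:recA} directly from the product formula of Lemma \ref{lemma:Mor} by extracting the coefficient of a fixed diagram $D$. Throughout I regard $P^{(n)}$ as an element of $\mathcal{TL}^A_{n+1}$ by adjoining a vertical strand at position $n+1$, so that the product $P^{(n)}g_{n+1,i}$ is defined. Expanding $P^{(n)}=\sum_{E}\mathrm{Coeff}^{(n)}(E)\,E$ over $n$-strand diagrams $E$ (each extended as above) and using bilinearity of the multiplication together with Lemma \ref{lemma:Mor}, I would first reduce the claim to
\begin{align*}
\mathrm{Coeff}^{(n+1)}(D)=\frac{1}{[n+1]}\sum_{i=1}^{n+1}[i]\sum_{E}\mathrm{Coeff}^{(n)}(E)\,c(E,i;D),
\end{align*}
where $c(E,i;D)$ denotes the coefficient of $D$ in the single product $E\,g_{n+1,i}$, namely a power of $-[2]$ (from closed loops) if $E\,g_{n+1,i}$ equals that multiple of $D$, and $0$ otherwise. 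Everything then rests on the diagrammatic analysis of one such product.

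Next I would record the connectivity of $g_{n+1,i}$ for $1\le i\le n$: a cup joining top points $n,n+1$, a cap joining bottom points $i,i+1$, and $n-1$ vertical strands, together with $g_{n+1,n+1}=\mathbf 1$. The central observation is that multiplying $E$ on top of $g_{n+1,i}$ glues the top cup $\{n,n+1\}$ of $g_{n+1,i}$ to the bottom of $E$. Since the adjoined strand of $E$ at position $n+1$ is vertical, no closed loop can form: the unique top-to-top arc of $g_{n+1,i}$ is this cup, and its two feet cannot be joined inside $E$, one of them running straight up to the boundary point $n+1$. Hence $E\,g_{n+1,i}$ is a single diagram with coefficient $1$, whose bottom boundary inherits the cap $\{i,i+1\}$ of $g_{n+1,i}$ (and for $i=n+1$ the product is just $E$ with a vertical rightmost strand).

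I would then convert this into a reconstruction statement. Because the bottom cap $\{i,i+1\}$ survives untouched, the equation $E\,g_{n+1,i}=D$ forces $D$ to carry an innermost cap at bottom positions $i,i+1$ when $i\le n$, respectively a vertical rightmost strand when $i=n+1$; that is, $i\in\{i\}$. Conversely, when $D$ has such an innermost cap, tracing the surviving strands through the junction determines $E$ uniquely: delete the cap $\{i,i+1\}$ from $D$, carry the top point $n+1$ of $D$ (reached through the cup) down to the bottom, and relabel the remaining bottom points $i+2,\dots,n+1$ as $i,\dots,n-1$. This is exactly the removal of the $i$-th innermost cap, so the unique solution is $E=D_i$. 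Consequently $c(E,i;D)=1$ when $E=D_i$ and $i\in\{i\}$, and $c(E,i;D)=0$ otherwise, which collapses the inner sum to $\mathrm{Coeff}^{(n)}(D_i)$ and produces \eqref{eq:recA}.

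The hard part will be the middle step: checking rigorously that the gluing never creates closed loops and that the reconstructed $E$ is a genuine \emph{planar} $n$-strand diagram in all cases (when the top point $n+1$ of $D$ belongs to a top cup, to a through-strand, or to a nested cap). I would settle this by showing that $E\mapsto E\,g_{n+1,i}$ is an injection from $n$-strand diagrams onto the $(n+1)$-strand diagrams bearing the cap $\{i,i+1\}$, with inverse the cap-removal above; planarity of $E$ is then automatic, since the product of planar diagrams is planar and the reconstruction is manifestly non-crossing.
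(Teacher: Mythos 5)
Your proposal is correct and takes essentially the same approach as the paper: just as in the paper's proof of the type $B$ analogue (and Morrison's original argument), one expands the reduced recurrence of Lemma \ref{lemma:Mor} and observes that multiplying a diagram by $g_{n+1,i}$ inserts an innermost cap at position $i$, so that each innermost cap of $D$ contributes $\genfrac{}{}{}{}{[i]}{[n+1]}\mathrm{Coeff}^{(n)}(D_{i})$. Your extra verifications---that no closed loops arise because the adjoined rightmost strand reaches the boundary, and that cap insertion is injective with cap removal as inverse---are exactly the details the paper leaves implicit.
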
 

\begin{example}
We compute the coefficient of the diagram corresponding to the generator $e_{1}$ for 
$n=3$.
We have 
\begin{align*}
\mathrm{Coeff}^{(3)}\left(
\tikzpic{-0.5}{[xscale=0.6]
\draw(0,0)..controls(0,0.5)and(1,0.5)..(1,0);
\draw(0,1)..controls(0,0.5)and(1,0.5)..(1,1);
\draw(2,0)--(2,1);	
}
\right)
&=\left(\genfrac{}{}{}{}{[1]}{[3]}+\genfrac{}{}{}{}{[3]}{[3]}\right)
\mathrm{Coeff}^{(2)}\left(
\tikzpic{-0.5}{[xscale=0.6]
\draw(0,0)..controls(0,0.5)and(1,0.5)..(1,0);
\draw(0,1)..controls(0,0.5)and(1,0.5)..(1,1);
}\right), \\
&=\genfrac{}{}{}{}{[2]^2}{[3]}\genfrac{}{}{}{}{1}{[2]}, \\
&=\genfrac{}{}{}{}{[2]}{[3]}.
\end{align*}
\end{example}

\section{Dyck tilings}
\label{sec:DT}
We introduce the notion of cover-inclusive Dyck tilings following \cite{KW11,KMPW12,SZJ12}.

A {\it Dyck path} of size $n$ is an up-down lattice path from $(0,0)$ to $(2n,0)$
which never goes below the line $y=0$.
We denote by $U:=(1,1)$ (resp. $R:=(1,-1)$) an up (resp. down) step in the path.
Let $\mathtt{Dyck}(n)$ be the set of Dyck paths of size $n$.
The number of Dyck path of size $n$ is equal to the Catalan number, {\it i.e.},
\begin{align*}
|\mathtt{Dyck}(n)|=\genfrac{}{}{}{}{1}{n+1}\genfrac{(}{)}{0pt}{}{2n}{n}.
\end{align*}
We denote by $\mu_{\mathrm{top}}(n)$ the top Dyck path in $\mathtt{Dyck}(n)$, that is, 
$\mu_{\mathrm{top}}(n)=U^{n}R^{n}$.

A {\it ribbon} is a connected skew shape which does not contain a $2$-by-$2$ rectangle.
A {\it Dyck tile} is a ribbon such that the centers of the unit boxes in the ribbon form 
a Dyck path.
The size of a Dyck tile is defined to be the size of the Dyck path characterizing the tile.
Let $\mu\in\mathtt{Dyck}(n)$ be a Dyck path.
We consider a tiling of the region $R(\mu)$ below $\mu_{\mathrm{top}}(n)$ and above $\mu$ by 
Dyck tiles. 
A Dyck tiling in the region $R(\mu)$ is said to be cover-inclusive if 
we translate a Dyck tile downward by $(0,-2)$, then it is strictly below $\mu$ or contained 
in another Dyck tile. In other words, the sizes of Dyck tiles are weakly decreasing from bottom 
to top.
We denote by $\mathcal{D}(\mu)$ the set of cover-inclusive Dyck tilings in the region $R(\mu)$.

\begin{figure}[ht]
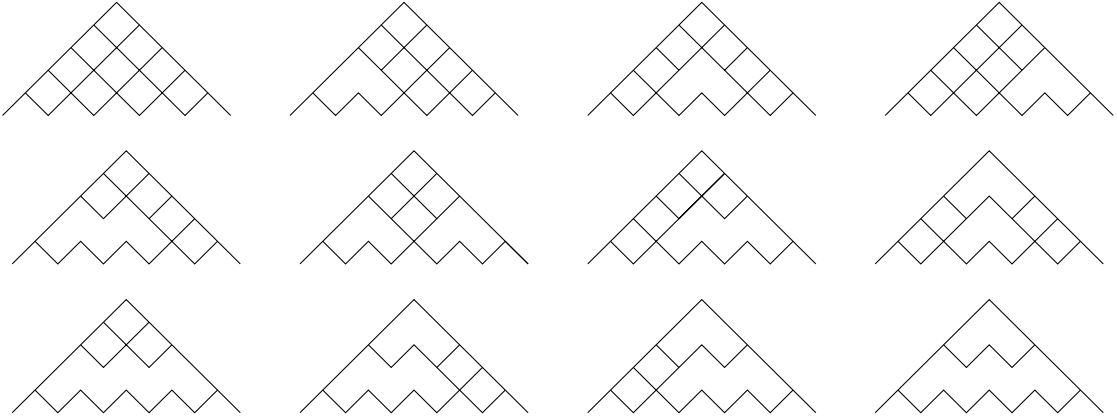

\tikzpic{-0.5}{[scale=0.3]
\draw(0,0)--(5,5)--(10,0);
\draw(1,1)--(2,0)--(6,4)(2,2)--(4,0)--(7,3)(3,3)--(6,0)--(8,2)(4,4)--(8,0)--(9,1);
}\quad
\tikzpic{-0.5}{[scale=0.3]
\draw(0,0)--(5,5)--(10,0);
\draw(1,1)--(2,0)--(3,1)--(4,0)--(7,3)(3,3)--(6,0)--(8,2)(4,4)--(8,0)--(9,1)(4,2)--(6,4);
}
\quad
\tikzpic{-0.5}{[scale=0.3]
\draw(0,0)--(5,5)--(10,0);
\draw(1,1)--(2,0)--(6,4)(2,2)--(4,0)--(5,1)--(6,0)--(8,2)(3,3)--(4,2)(4,4)--(8,0)--(9,1)(6,2)--(7,3);
}
\quad
\tikzpic{-0.5}{[scale=0.3]
\draw(0,0)--(5,5)--(10,0);
\draw(1,1)--(2,0)--(6,4)(2,2)--(4,0)--(7,3)(3,3)--(6,0)--(7,1)--(8,0)--(9,1)(4,4)--(6,2);
} \\[12pt]
\tikzpic{-0.5}{[scale=0.3]
\draw(0,0)--(5,5)--(10,0);
\draw(1,1)--(2,0)--(3,1)--(4,0)--(5,1)--(6,0)--(8,2)(3,3)--(4,2)--(5,3)--(8,0)--(9,1)(4,4)--(5,3)--(6,4)(6,2)--(7,3);
}\quad
\tikzpic{-0.5}{[scale=0.3]
\draw(0,0)--(5,5)--(10,0);
\draw(1,1)--(2,0)--(3,1)--(4,0)--(5,1)--(6,0)--(7,1)--(8,0)--(9,1)--(10,0);
\draw(3,3)--(5,1)--(7,3)(4,4)--(6,2)(4,2)--(6,4);
}\quad
\tikzpic{-0.5}{[scale=0.3]
\draw(0,0)--(5,5)--(10,0);
\draw(1,1)--(2,0)--(6,4)(2,2)--(3,1)(3,3)--(4,2)--(6,4)(4,4)--(6,2)--(7,3);
\draw(3,1)--(4,0)--(5,1)--(6,0)--(7,1)--(8,0)--(9,1);
}\quad
\tikzpic{-0.5}{[scale=0.3]
\draw(0,0)--(5,5)--(10,0);
\draw(1,1)--(2,0)--(5,3)--(8,0)--(9,1)(3,1)--(4,0)--(5,1)--(6,0)--(7,1)(2,2)--(3,1);
\draw(3,3)--(4,2)(6,2)--(7,3)(7,1)--(8,2);
}\\[12pt]
\tikzpic{-0.5}{[scale=0.3]
\draw(0,0)--(5,5)--(10,0);
\draw(1,1)--(2,0)--(3,1)--(4,0)--(5,1)--(6,0)--(7,1)--(8,0)--(9,1);
\draw(3,3)--(4,2)--(5,3)--(6,2)--(7,3)(4,4)--(5,3)--(6,4);
}\quad
\tikzpic{-0.5}{[scale=0.3]
\draw(0,0)--(5,5)--(10,0);
\draw(1,1)--(2,0)--(3,1)--(4,0)--(5,1)--(6,0)--(7,1)--(8,0)--(9,1);
\draw(3,3)--(4,2)--(5,3)--(7,1);
\draw(6,2)--(7,3)(7,1)--(8,2);
}\quad
\tikzpic{-0.5}{[scale=0.3]
\draw(0,0)--(5,5)--(10,0);
\draw(1,1)--(2,0)--(3,1)--(4,0)--(5,1)--(6,0)--(7,1)--(8,0)--(9,1);
\draw(3,3)--(4,2)--(5,3)--(6,2)--(7,3);
\draw(2,2)--(3,1)--(4,2);
}\quad
\tikzpic{-0.5}{[scale=0.3]
\draw(0,0)--(5,5)--(10,0);
\draw(1,1)--(2,0)--(3,1)--(4,0)--(5,1)--(6,0)--(7,1)--(8,0)--(9,1);
\draw(3,3)--(4,2)--(5,3)--(6,2)--(7,3);
}

\caption{Cover-inclusive Dyck tilings}
\label{fig:DT}
\end{figure}

Figure \ref{fig:DT} shows all cover-inclusive Dyck tilings 
above the bottom path $(UR)^{5}$ and below the top path $U^5R^5$.

Let $D\in\mathcal{D}(\mu)$ be a Dyck tiling, and $d$ be a Dyck tile in $D$.
Recall that a tile $d$ consists of several unit boxes. Given a unit box $s$, we denote by $h(s)$
the height of the center of $s$.
Then, we define 
\begin{align*}
h(d):=\min\{h(s): s\in d\}.
\end{align*}  

\begin{defn}
Let $\mu\in\mathtt{Dyck}(n)$. 
The generating function $Z(\mu)$ of the cover-inclusive Dyck tilings is defined as 
\begin{align*}
Z(\mu):=\sum_{D\in\mathcal{D}(\mu)} \prod_{d\in D}\genfrac{}{}{}{}{[h(d)]}{[h(d)+1]}.
\end{align*}
\end{defn}

\begin{example}
Let $D_1$ and $D_{2}$ be two Dyck tilings in the second and fourth column of the second row in 
Figure \ref{fig:DT}.
The contribution of $D_1$ to $Z(\mu)$ is given by
\begin{align*}
\prod_{d\in D_1}\genfrac{}{}{}{}{[h(d)]}{[h(d)+1]}
=\genfrac{}{}{}{}{1}{[2]^2}\genfrac{}{}{}{}{[2]}{[3]}\genfrac{}{}{}{}{[3]^2}{[4]^2}
\genfrac{}{}{}{}{[4]}{[5]}=\genfrac{}{}{}{}{[3]}{[2][4][5]}.
\end{align*}
Similarly, we have 
\begin{align*}
\prod_{d\in D_2}\genfrac{}{}{}{}{[h(d)]}{[h(d)+1]}
=\genfrac{}{}{}{}{1}{[2]^3}\genfrac{}{}{}{}{[2]^2}{[3]^2}\genfrac{}{}{}{}{[3]}{[4]}
=\genfrac{}{}{}{}{1}{[2][3][4]}.
\end{align*}
\end{example}

\begin{example}
Consider the Dyck path $\mu=URURUR$. We have two Dyck tilings:
\begin{align*}
\tikzpic{-0.5}{[scale=0.5]
\draw(0,0)--(3,3)--(6,0);
\draw(1,1)--(2,0)--(4,2)(2,2)--(4,0)--(5,1);	
}\qquad
\tikzpic{-0.5}{[scale=0.5]
\draw(0,0)--(3,3)--(6,0)(1,1)--(2,0)--(3,1)--(4,0)--(5,1);
}
\end{align*}
The partition function $Z(\mu)$ is calculated as 
\begin{align*}
Z(\mu)=\genfrac{}{}{}{}{[2]}{[3]}\genfrac{}{}{}{}{[1]}{[2]}\genfrac{}{}{}{}{[1]}{[2]}+\genfrac{}{}{}{}{[1]}{[2]}
=\genfrac{}{}{}{}{[2]}{[3]}.
\end{align*}
\end{example}

Before stating the main theorem, we introduce a bijection between an $n$ strand Temperley--Lieb 
diagram $D$ and a Dyck path of size $n$.
Recall that $D$ consists of $n$ arcs which connect a point with another point.
We fold the diagram $D$ down to the right such that $2n$ points are in line.
An example is shown in Figure \ref{fig:TLDDp}.
Then, we obtain a Dyck path from the folded diagram in such a way that 
we attach a label $U$ (resp. $R$) on the left (resp. right) vertex of an arc.
\begin{figure}[ht]
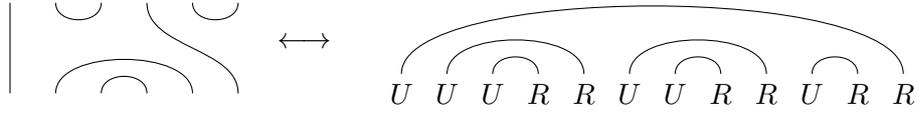

\tikzpic{-0.5}{[scale=0.6]
\draw(0,0)--(0,2);
\draw(1,0)..controls(1,1)and(4,1)..(4,0);
\draw(2,0)..controls(2,0.5)and(3,0.5)..(3,0);
\draw(1,2)..controls(1,1.5)and(2,1.5)..(2,2);
\draw(4,2)..controls(4,1.5)and(5,1.5)..(5,2);
\draw(3,2)..controls(3,1)and(5,1)..(5,0);
}\quad$\longleftrightarrow$\quad
\tikzpic{-0.5}{[scale=0.6]
\draw(0,0)node[anchor=north]{$U$}..controls(0,2)and(11,2)..(11,0)node[anchor=north]{$R$};
\draw(1,0)node[anchor=north]{$U$}..controls(1,1)and(4,1)..(4,0)node[anchor=north]{$R$};
\draw(2,0)node[anchor=north]{$U$}..controls(2,0.5)and(3,0.5)..(3,0)node[anchor=north]{$R$};
\draw(5,0)node[anchor=north]{$U$}..controls(5,1)and(8,1)..(8,0)node[anchor=north]{$R$};
\draw(6,0)node[anchor=north]{$U$}..controls(6,0.5)and(7,0.5)..(7,0)node[anchor=north]{$R$};
\draw(9,0)node[anchor=north]{$U$}..controls(9,0.5)and(10,0.5)..(10,0)node[anchor=north]{$R$};
}
\caption{Bijection between a Temperley--Lieb diagram and a Dyck path}
\label{fig:TLDDp}
\end{figure}
By this bijection, we identify a Temperley--Lieb diagram with a Dyck path.

The next theorem is one of the main results in this paper.
\begin{theorem}
\label{thrm:CDZmu}
Let $D$ be an $n$ strand Temperley--Lieb diagram and $\mu\in\mathtt{Dyck}(n)$ be 
the corresponding Dyck path.
Then, we have 
\begin{align}
\label{eq:CDZmu}
\mathrm{Coeff}^{(n)}(D)=Z(\mu).
\end{align}  
\end{theorem}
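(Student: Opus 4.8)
The plan is to induct on $n$ and to show that the generating function $Z$ obeys the same recursion as the coefficients $\mathrm{Coeff}^{(n)}$, namely Morrison's recursion \eqref{eq:recA}. For the base case $n=1$ there is a unique diagram, the identity, whose Dyck path is $\mu=UR=\mu_{\mathrm{top}}(1)$; both sides of \eqref{eq:CDZmu} equal $1$, since the only tiling in $\mathcal{D}(\mu_{\mathrm{top}}(1))$ is the empty one. For the inductive step, let $D$ be an $(n+1)$-strand diagram with Dyck path $\mu\in\mathtt{Dyck}(n+1)$. Applying \eqref{eq:recA} and then the induction hypothesis to each $D_i$ (an $n$-strand diagram with path $\mu_i\in\mathtt{Dyck}(n)$) reduces the theorem to the purely combinatorial identity
\begin{align}
\label{eq:Zrec}
Z(\mu)=\sum_{\{i\}}\genfrac{}{}{}{}{[i]}{[n+1]}\,Z(\mu_i),
\end{align}
where $\{i\}$ runs over the positions of the innermost caps of $D$.

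The first task is to read off the data of \eqref{eq:Zrec} from the Dyck path. Under the folding bijection of Figure \ref{fig:TLDDp}, an innermost cap of $D$ corresponds to a peak (a factor $UR$) of $\mu$, and deleting that cap is realized by deleting the corresponding $UR$, which produces exactly the size-$n$ path $\mu_i$; the special innermost cap given by the rightmost vertical strand is the terminal peak and yields the index $i=n+1$. Thus the sum in \eqref{eq:Zrec} is indexed by the peaks of $\mu$, and each term removes one peak. Note that distinct caps may produce the same $\mu_i$ but contribute different weights $[i]/[n+1]$, so the index $i$ must be tracked through the diagram position and not merely through $\mu_i$.

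The heart of the argument is the identity \eqref{eq:Zrec}, and the key observation is the telescoping
\begin{align*}
\genfrac{}{}{}{}{[i]}{[n+1]}=\prod_{h=i}^{n}\genfrac{}{}{}{}{[h]}{[h+1]},
\end{align*}
which matches the weight of a staircase of Dyck tiles whose minimal heights $h(d)$ run through the consecutive values $i,i+1,\dots,n$. Accordingly, I would set up a weight-preserving bijection decomposing each cover-inclusive tiling $T\in\mathcal{D}(\mu)$ as follows: for the chosen peak of $\mu$ at position $i$, peel off from the top the unique diagonal staircase of tiles sitting above that peak and reaching the top peak of $\mu_{\mathrm{top}}(n+1)$, one tile per height level $i,\dots,n$, contributing the factor $[i]/[n+1]$; then check that the remainder is a cover-inclusive tiling of the smaller region $R(\mu_i)$. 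Summing over admissible peaks reproduces the right-hand side of \eqref{eq:Zrec}, and the induction hypothesis identifies $Z(\mu_i)$ with $\mathrm{Coeff}^{(n)}(D_i)$. One sanity check is $\mu=URURUR$: the flat tiling corresponds to the empty staircase ($i=n+1$), while the other tiling has its height-$2$ tile together with one height-$1$ tile forming the staircase of heights $1,2$ (weight $[1]/[3]$), leaving a single height-$1$ tile as the tiling of $R(URUR)$.

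The main obstacle is precisely this decomposition. One must show that the staircase to be removed is well defined and unique for each choice of peak, that its weight is exactly the telescoping product $[i]/[n+1]$ with no leftover factors, that the diagram position $i$ coincides with the base height of the staircase under the folding bijection, and—most delicately—that the cover-inclusive condition is both what guarantees the staircase exists and what is preserved after its removal, so that the remainder is a genuine element of $\mathcal{D}(\mu_i)$ and every element of $\mathcal{D}(\mu_i)$ is hit exactly once. Verifying that this bookkeeping is consistent across all peaks simultaneously, so that the disjoint union over $\{i\}$ exhausts $\mathcal{D}(\mu)$ without overcounting, is where the real work lies.
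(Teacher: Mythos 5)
Your overall architecture coincides with the paper's: induct on $n$, and prove that $Z(\mu)$ satisfies Morrison's recursion \eqref{eq:recA} by a weight-preserving correspondence between tilings of $R(\mu)$ and pairs (innermost cap $i$, tiling of $R(\mu_i)$), with the telescoping identity $[i]/[n+1]=\prod_{h=i}^{n}[h]/[h+1]$ realized by a staircase of single boxes at heights $i,\dots,n$. The paper runs this correspondence in the insertion direction (insert $UR$ into $\mu_i$ at position $i$, then add unit boxes up to the top path, as in Figure \ref{fig:DTS}) and you run the inverse, which is fine in principle.

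However, there is a genuine gap, and it is exactly the ingredient the paper emphasizes: non-trivial Dyck tiles crossing the inserted column must be \emph{enlarged} by one (in the insertion direction), hence \emph{shrunk} by one (in your peeling direction) — they are neither removed nor left untouched. You peel off a staircase of whole tiles, one per height level $i,\dots,n$, and then assert that "the remainder is a cover-inclusive tiling of $R(\mu_i)$." This fails whenever a tile of size $\geq 2$ meets the column above the chosen peak, and your own sanity check contains the counterexample: for $\mu=URURUR$, the tiling consisting of the single size-$2$ tile is assigned to $i=3$ with empty staircase, but the remainder is then that size-$2$ tile, which is \emph{not} an element of $\mathcal{D}(URUR)$ (whose unique tiling is a single box). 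The correct inverse deletes the inserted $UR$ and shrinks the size-$2$ tile to a single box; your numerical check still passed only because shrinking preserves the weight — $h(d)$ is the \emph{minimal} box height, unchanged when a crossing tile loses a box — which is precisely why the paper's map is weight-preserving. Without the shrink/enlarge operation your decomposition is not even defined for such tilings (no staircase of tiles with $h(d)=i,\dots,n$ exists when a non-trivial tile intervenes), so the disjointness and exhaustiveness you defer as "where the real work lies" cannot be completed in the framework as you set it up. A secondary inaccuracy: the sum in \eqref{eq:recA} is indexed not by all peaks of $\mu$ but only by peaks coming from innermost caps (bottom caps and the rightmost vertical strand); peaks arising from top cups — e.g.\ the second peak of $URUR$ for the two-strand diagram $e_1$ — must be excluded, otherwise the right-hand side overcounts.
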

\begin{proof}
We prove that the generating function $Z(\mu)$ satisfies the recurrence 
relation Eq. (\ref{eq:recA}).
The coefficient $\mathrm{Coeff}^{(n+1)}(D)$ is a linear combination of 
the coefficient $\mathrm{Coeff}^{(n)}(D_{i})$ in Eq. (\ref{eq:recA}).
Let $\mu_{i}$ be the Dyck path corresponding to the diagram $D_{i}$.
To recover $\mu$ from $\mu_{i}$, we insert a subpath $UR$ into $\mu_{i}$
at the position $i$.
If there is a Dyck tile $d$ at the position $i$ in the Dyck tiling $D(\mu_{i})$,
we enlarge the size of $d$ by one.
After the enlargement of the Dyck tile, the top path $\nu$ of the new Dyck tiling may not be $U^{n+1}R^{n+1}$.
We add unit boxes in the region above $\nu$ and the top path if $\nu$ is not the top path.
In this way, we obtain a Dyck tiling of size $n$ above the path $\mu$.
By definition of the weight of a Dyck tile, we have a factor 
$[i+1]/[n+1]$ coming from the added unit boxes.
Figure \ref{fig:DTS} is an example of the enlargement and addition of unit boxes.
\begin{figure}[ht]
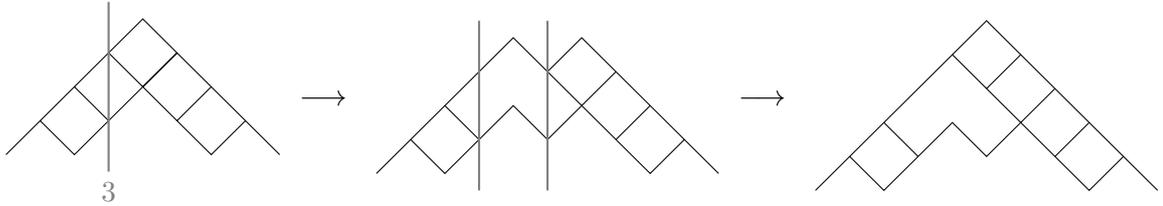

\tikzpic{-0.5}{[scale=0.45]
\draw(0,0)--(4,4)--(8,0)(1,1)--(2,0)--(5,3)(2,2)--(3,1)(3,3)--(6,0)--(7,1)(4,2)--(5,3)(5,1)--(6,2);
\draw[gray,thick](3,-0.5)node[anchor=north]{$3$}--(3,4.5);
}
$\longrightarrow$
\tikzpic{-0.5}{[scale=0.45]
\draw(0,0)--(4,4)--(5,3)--(6,4)--(10,0)(1,1)--(2,0)--(4,2)--(5,1)--(6,2)--(8,0)--(9,1);
\draw(2,2)--(3,1)(5,3)--(6,2)--(7,3)(7,1)--(8,2);
\draw[gray,thick](3,-0.5)--(3,4.5)(5,-0.5)--(5,4.5);	
}
$\longrightarrow$
\tikzpic{-0.5}{[scale=0.45]
\draw(0,0)--(4,4)--(5,3)--(6,4)--(10,0)(1,1)--(2,0)--(4,2)--(5,1)--(6,2)--(8,0)--(9,1);
\draw(2,2)--(3,1)(5,3)--(6,2)--(7,3)(7,1)--(8,2);
\draw(4,4)--(5,5)--(6,4);	
}
\caption{An example of the enlargement and addition of unit boxes.}
\label{fig:DTS}
\end{figure}

These operations are invertible. In other words, given a Dyck tiling $D$ in $\mathcal{D}(\mu)$,
we delete unit boxes from $D$ and shrink the size of non-trivial Dyck tiles at the position $i+1$.
The deletion and shrinking gives a Dyck tiling corresponding to $\mu_{i}$.
Therefore, we have a one-to-one correspondence between $D$ and a Dyck tiling above $\mu_{i}$ 
for some $i$.

From these observations, we have 
\begin{align*}
Z(\mu)=\sum_{i\in\{i\}}\genfrac{}{}{}{}{[i]}{[n+1]}Z(\mu_{i}),
\end{align*}
which is equivalent to Eq. (\ref{eq:recA}).
Thus, we have Eq. (\ref{eq:CDZmu}).
\end{proof}

\section{Temperley--Lieb algebra of type \texorpdfstring{$B$}{B} and projections}
\label{sec:TLB}
\subsection{Temperley--Lieb algebra of type \texorpdfstring{$B$}{B}}
The Temperley--Lieb algebra $\mathcal{TL}^{B}_n$ of type $B$ is 
generated by $n$ generators $\{e_0,\ldots, e_{n-1}\}$ satisfying 
\begin{align*}
&e_0^2=-[1]_{s}e_0, \\
&e_1e_0e_1=(qs^{-1}+q^{-1}s)e_{1}, \\
&e_{0}e_{i}=e_{i}e_{0}, \quad \forall i\in\{2,3,\ldots,n-1\},
\end{align*}
and the set $\{e_1,\ldots,e_{n-1}\}$ generates the Temperley--Lieb algebra $\mathcal{TL}^{A}_{n}$.	
We depict the generator $e_{0}$ as 
\begin{align*}
e_{0}=
\tikzpic{-0.5}{
\draw(0,0)node[anchor=north]{$1$}--(0,1)(1,0)node[anchor=north]{$2$}--(1,1)
(3,0)node[anchor=north]{$n$}--(3,1);
\draw(2,0.5)node{$\cdots$};
\draw(0,0.5)node{$\bullet$};
}
\end{align*}

As in the case of $\mathcal{TL}_{n}^{A}$, we have the following reductions:
\begin{align*}
\tikzpic{-0.5}{
\draw circle(15pt);
}=-[2]
\qquad
\tikzpic{-0.5}{[yscale=0.6]
\draw(0,0)--(0,3);
\draw(0,1)node{$\bullet$}(0,2)node{$\bullet$};
}=-[1]_{s}
\tikzpic{-0.5}{[yscale=0.9]
\draw(0,0)--(0,2)(0,1)node{$\bullet$};
}\qquad
\tikzpic{-0.5}{
\draw circle(15pt);
\draw(180:15pt)node{$\bullet$};
}=qs^{-1}+q^{-1}s
\end{align*}

\begin{remark}
\label{remark:dot}
As in Figure \ref{fig:TLDDp}, we fold an $n$ strand Temperley--Lieb diagram of 
type $B$, and obtain a cap diagram possibly with dots.
Since a dot appears in a cap diagram $D$ through the action of $e_{0}$, 
a dotted cap is an outer-most cap in $D$.
\end{remark}

\subsection{Jones--Wenzl projections of type \texorpdfstring{$B$}{B}}
As in the case of Jones--Wenzl projections of type $A$, the projection $Q^{(n)}$ of type $B$ for $n\ge0$ is
characterized by the following properties:
\begin{align}
\label{eq:Qpro}
\begin{aligned}
&Q^{(n)}\neq 0, \qquad Q^{(n)}Q^{(n)}=Q^{(n)}, \\
&e_{i}Q^{(n)}=Q^{(n)}e_{i}=0, \quad \forall i\in[0,n-1].
\end{aligned}
\end{align}
We first define an element $Q^{(n)}$ by a recurrence formula, and show that 
$Q^{(n)}$ is indeed a projection satisfying Eq. (\ref{eq:Qpro}).

\begin{defn}
\label{defn:Q}
Let $Q^{(n)}$ be an element in $\mathcal{TL}^{B}_{n}$ satisfying the following recurrence formula: 
\begin{align*}
Q^{(n+1)}=Q^{(n)}+ \genfrac{}{}{}{}{[n]_s}{[n+1]_s}Q^{(n)}e_{n}Q^{(n)},
\end{align*}
with the initial condition $Q^{(0)}=\mathbf{1}$.
\end{defn}

For example, we have 
\begin{align*}
Q^{(1)}&=\mathbf{1}+\genfrac{}{}{}{}{1}{[1]_s}e_{0}, \\
Q^{(2)}&=\mathbf{1}+\genfrac{}{}{}{}{1}{[1]_s}e_{0}+\genfrac{}{}{}{}{[1]_s}{[2]_s}e_1
+\genfrac{}{}{}{}{1}{[2]_s}(e_1e_0+e_0e_1)+\genfrac{}{}{}{}{1}{[2]_s[1]_s}e_0e_1e_0, \\
&=
\tikzpic{-0.5}{[xscale=0.5,yscale=0.5]
\draw(0,0)--(0,2)(1,0)--(1,2);
}
+\genfrac{}{}{}{}{1}{[1]_s}
\tikzpic{-0.5}{[xscale=0.5,yscale=0.5]
\draw(0,0)--(0,2)(1,0)--(1,2);
\draw(0,1)node{$\bullet$};
}
+\genfrac{}{}{}{}{[1]_s}{[2]_s}
\tikzpic{-0.5}{[xscale=0.5,yscale=0.5]
\draw(0,0)..controls(0,1)and(1,1)..(1,0);
\draw(0,2)..controls(0,1)and(1,1)..(1,2);
}
+\genfrac{}{}{}{}{1}{[2]_s}
\left(\tikzpic{-0.5}{[xscale=0.5,yscale=0.5]
\draw(0,0)..controls(0,1)and(1,1)..(1,0);
\draw(0,2)..controls(0,1)and(1,1)..(1,2);
\draw(0.5,1.25)node{$\bullet$};
}
+\tikzpic{-0.5}{[xscale=0.5,yscale=0.5]
\draw(0,0)..controls(0,1)and(1,1)..(1,0);
\draw(0,2)..controls(0,1)and(1,1)..(1,2);
\draw(0.5,0.75)node{$\bullet$};
}
\right)
+\genfrac{}{}{}{}{1}{[2]_s[1]_s}
\tikzpic{-0.5}{[xscale=0.5,yscale=0.5]
\draw(0,0)..controls(0,1)and(1,1)..(1,0);
\draw(0,2)..controls(0,1)and(1,1)..(1,2);
\draw(0.5,1.25)node{$\bullet$};
\draw(0.5,0.75)node{$\bullet$};
}.
\end{align*}

By Definition \ref{defn:Q}, $Q^{(n)}$ is $\mathbf{1}$ plus a linear combination 
of products on $e_{0},e_1,\ldots,e_{n}$.
Further, $Q^{(n)}$ commutes with $e_{i}$, $i\ge n+1$.

\begin{prop}
\label{prop:eQpro}
Let $Q^{(n)}$ be an element defined in Definition \ref{defn:Q}.
Then, we have
\begin{enumerate}
\item $Q^{(n)}Q^{(n)}=Q^{(n)}$, hence $Q^{(n)}$ is a projection.
\item $(e_{n}Q^{(n)})^2=-[n+1]_s/[n]_{s}e_{n}Q^{(n)}$.
\item $(Q^{(n)}e_{n}Q^{(n)})^2=-[n+1]_s/[n]_{s}Q^{(n)}e_{n}Q^{(n)}$.
\end{enumerate}
\end{prop}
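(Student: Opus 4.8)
The plan is to prove the three statements simultaneously by induction on $n$, since they are tightly coupled: proving idempotence (1) at level $n+1$ requires the quadratic relation (3) at level $n$, while (2) and (3) at level $n+1$ both rest on idempotence at levels $n$ and $n+1$. The base case $n=0$ is immediate: $Q^{(0)}=\mathbf{1}$ gives $Q^{(0)}Q^{(0)}=Q^{(0)}$, and since $[0]_s=1$ and $e_0^2=-[1]_se_0$, both $(e_0Q^{(0)})^2$ and $(Q^{(0)}e_0Q^{(0)})^2$ equal $-\frac{[1]_s}{[0]_s}e_0$, as required.

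The heart of the inductive step is the evaluation of the sandwich $e_{n+1}Q^{(n+1)}e_{n+1}$. Expanding $Q^{(n+1)}=Q^{(n)}+\frac{[n]_s}{[n+1]_s}Q^{(n)}e_nQ^{(n)}$ and using that $Q^{(n)}$ is a word in $e_0,\dots,e_{n-1}$ and hence commutes with $e_{n+1}$, the first summand contributes $e_{n+1}^2=-[2]e_{n+1}$ applied to $Q^{(n)}$, while in the second summand the two copies of $e_{n+1}$ can be pushed inward to form $e_{n+1}e_ne_{n+1}$, which then collapses against $Q^{(n)}$ by idempotence (1) at level $n$. I would show that both routes combine to
\[
e_{n+1}Q^{(n+1)}e_{n+1}=-\frac{[n+2]_s}{[n+1]_s}\,e_{n+1}Q^{(n)},
\]
where the coefficient comes from simplifying $-[2]+\frac{[n]_s}{[n+1]_s}$ via the recursion $[2][n+1]_s=[n+2]_s+[n]_s$. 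A point requiring separate care is the very first step $n=0\to1$: there the relevant braid relation is the type-$B$ one, $e_1e_0e_1=(qs^{-1}+q^{-1}s)e_1$, rather than $e_{n+1}e_ne_{n+1}=e_{n+1}$, so one must check by hand that $-[2]+\frac{qs^{-1}+q^{-1}s}{[1]_s}$ again equals $-\frac{[2]_s}{[1]_s}$; this uses $[1]_s=s+s^{-1}$ and the explicit form of $[2]_s$.

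With the sandwich identity in place, the three statements at level $n+1$ follow by bookkeeping. For (1), I would expand $Q^{(n+1)}Q^{(n+1)}$ into four terms, reduce the two cross terms and the quartic term using idempotence (1) at level $n$ together with relation (3) at level $n$ to rewrite $(Q^{(n)}e_nQ^{(n)})^2$, and observe that the coefficients telescope back to $Q^{(n+1)}$. For (2) and (3), one first records the absorption identities $Q^{(n)}Q^{(n+1)}=Q^{(n+1)}=Q^{(n+1)}Q^{(n)}$, which are immediate from (1) at level $n$. Then $(e_{n+1}Q^{(n+1)})^2=(e_{n+1}Q^{(n+1)}e_{n+1})Q^{(n+1)}$ and, using the sandwich identity followed by absorption, this equals $-\frac{[n+2]_s}{[n+1]_s}e_{n+1}Q^{(n+1)}$, giving (2); the same computation conjugated by $Q^{(n+1)}$, together with idempotence (1) at level $n+1$, gives (3).

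I expect the main obstacle to be organizational rather than conceptual: one must order the mutual induction so that each of (1)--(3) at level $n+1$ is deduced only from items already proved, and track the quantum-integer coefficients carefully so that the factor $-[2]+\frac{[n]_s}{[n+1]_s}$ collapses exactly to $-\frac{[n+2]_s}{[n+1]_s}$. The genuinely type-$B$ feature is the exceptional initial step, where the dotted relation $e_1e_0e_1=(qs^{-1}+q^{-1}s)e_1$ must reproduce the same coefficient that the generic recursion produces elsewhere; verifying this compatibility is the one place where the special values of $[n]_s$ really enter.
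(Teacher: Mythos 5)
Your proof is correct and follows essentially the same route as the paper, which simply states that (1) and (2) are proved by induction on $n$ with (3) following from (1) and (2); your write-up fills in exactly that induction, including the key sandwich identity $e_{n+1}Q^{(n+1)}e_{n+1}=-\frac{[n+2]_s}{[n+1]_s}e_{n+1}Q^{(n)}$ and the coefficient collapse via $[2][n+1]_s=[n+2]_s+[n]_s$. You also correctly isolate the one genuinely type-$B$ point---the exceptional step $n=0\to1$, where $e_1e_0e_1=(qs^{-1}+q^{-1}s)e_1$ yields $-[2]+\frac{qs^{-1}+q^{-1}s}{[1]_s}=-\frac{[2]_s}{[1]_s}$, consistent with the generic formula since the relation $[2][n]_s=[n+1]_s+[n-1]_s$ only holds for $n\ge2$.
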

\begin{proof}
(1) and (2) are shown by induction on $n$.
(3) follows from (1) and (2).
\end{proof}

\begin{prop}
The element $Q^{(n)}$ in Definition \ref{defn:Q} satisfies
the properties (\ref{eq:Qpro}). 
\end{prop}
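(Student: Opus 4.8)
The plan is to reduce everything to Proposition \ref{prop:eQpro}. The idempotency $Q^{(n)}Q^{(n)} = Q^{(n)}$ is exactly part (1) of that proposition, and $Q^{(n)} \neq 0$ is immediate: by Definition \ref{defn:Q} the extra term $Q^{(n)}e_n Q^{(n)}$ is a linear combination of diagrams each containing a cap, so the coefficient of the identity diagram $\mathbf{1}$ in $Q^{(n)}$ stays equal to $1$ for all $n$. Thus the only remaining content is the annihilation $e_i Q^{(n)} = Q^{(n)} e_i = 0$ for $i \in [0, n-1]$, which I would establish by induction on $n$, handling the left-hand identities directly and then deducing the right-hand ones from the vertical-flip anti-automorphism of $\mathcal{TL}^B_n$.

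First I would prove the left identities $e_i Q^{(n)} = 0$ for all $i \in [0,n-1]$ by induction on $n$, the base case $n = 0$ being vacuous. For the inductive step, write $Q^{(n+1)} = Q^{(n)} + \frac{[n]_s}{[n+1]_s} Q^{(n)} e_n Q^{(n)}$ and split on $i$. For $i \in [0, n-1]$ the induction hypothesis gives $e_i Q^{(n)} = 0$, so both summands of $e_i Q^{(n+1)}$ vanish. The crucial case is $i = n$, where
\begin{align*}
e_n Q^{(n+1)} = e_n Q^{(n)} + \genfrac{}{}{}{}{[n]_s}{[n+1]_s}(e_n Q^{(n)})^2 = e_n Q^{(n)} + \genfrac{}{}{}{}{[n]_s}{[n+1]_s}\left(-\genfrac{}{}{}{}{[n+1]_s}{[n]_s}\right) e_n Q^{(n)} = 0,
\end{align*}
the middle equality being precisely Proposition \ref{prop:eQpro}(2). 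It is exactly this cancellation that the coefficient $[n]_s/[n+1]_s$ in Definition \ref{defn:Q} is designed to produce, so this is the step carrying the real weight of the argument.

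For the right identities I would invoke the anti-automorphism $\ast$ of $\mathcal{TL}^B_n$ induced by reflecting a diagram about a horizontal axis: it fixes each generator $e_i$ (including the dotted $e_0$, whose reflection is again a dotted strand) and reverses products, and it is well defined because every defining relation of type $B$ is symmetric under this reflection. A short induction using Definition \ref{defn:Q} then shows $Q^{(n)}$ is $\ast$-invariant, since $(Q^{(n)} e_n Q^{(n)})^\ast = Q^{(n)} e_n Q^{(n)}$ once $(Q^{(n)})^\ast = Q^{(n)}$. Applying $\ast$ to $e_i Q^{(n)} = 0$ yields $Q^{(n)} e_i = (e_i Q^{(n)})^\ast = 0$, which finishes the annihilation and hence the proposition. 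The main obstacle I anticipate is not any single computation but verifying that $\ast$ really is an anti-automorphism in the presence of dots; should that be awkward, one can bypass $\ast$ entirely by proving the mirror identity $(Q^{(n)} e_n)^2 = -\frac{[n+1]_s}{[n]_s} Q^{(n)} e_n$ by a parallel induction and repeating the displayed computation on the right.
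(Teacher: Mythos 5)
Your proof is correct and takes essentially the same route as the paper: induction on $n$ reducing everything to Proposition \ref{prop:eQpro}, with idempotency from part (1) and the key cancellation $e_{n}Q^{(n+1)}=e_{n}Q^{(n)}+\frac{[n]_s}{[n+1]_s}(e_{n}Q^{(n)})^{2}=0$ from part (2). Your two additions—deriving the right-hand identities from the vertical-flip anti-automorphism fixing each $e_i$ (the paper silently invokes the mirror image of Proposition \ref{prop:eQpro}(2), which your $\ast$-invariance argument, or your alternative of proving $(Q^{(n)}e_n)^2=-\frac{[n+1]_s}{[n]_s}Q^{(n)}e_n$ directly, legitimately supplies) and explicitly checking $Q^{(n)}\neq 0$ via the coefficient of the identity diagram—are details the paper leaves implicit, not a different approach.
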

\begin{proof}
We prove the statement by induction on $n$.
For $n=1$, we have $e_0Q^{(1)}=Q^{(1)}e_0=0$ by a simple calculation.
Suppose that we have Eq. (\ref{defn:Q}) for up to $n$.
Then, we have $e_{i}Q^{(n+1)}=Q^{(n+1)}e_{i}=0$ for $0\le i\le n-1$ by the recurrence 
formula in Definition \ref{defn:Q} and the induction hypothesis. 
We have $e_{n}Q^{(n+1)}=Q^{(n+1)}e_n=0$ by Proposition \ref{prop:eQpro}.
We also have $Q^{(n+1)}Q^{(n+1)}=Q^{(n+1)}$ by Proposition \ref{prop:eQpro}.
This completes the proof.
\end{proof}

We introduce a family of diagrams $g_{n,i}$, $0\le i\le n$ for type $B$. 
The diagrams $g_{n,i}$, $1\le i\le n$, are the same as the case of $\mathcal{TL}_{A}^{n}$.
The diagram $g_{n,0}$ is the diagram obtained from $g_{n,1}$ by putting a dot on the 
left bottom cap. 
We depict the diagrams $g_{4,i}$, $0\le i\le 4$, in Figure \ref{fig:g}.

\begin{figure}[ht]
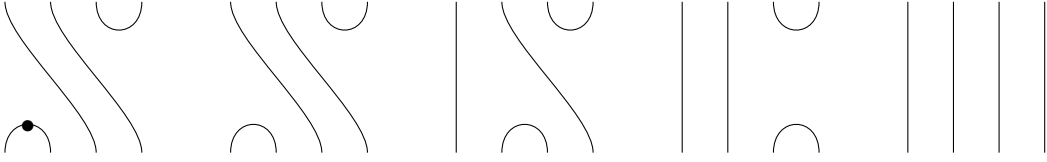

\tikzpic{-0.5}{[xscale=0.6]
\draw(0,0)..controls(0,0.5)and(1,0.5)..(1,0);
\draw(0.5,0.35)node{$\bullet$};
\draw(2,0)..controls(2,0.5)and(0,1.5)..(0,2);
\draw(3,0)..controls(3,0.5)and(1,1.5)..(1,2);
\draw(2,2)..controls(2,1.5)and(3,1.5)..(3,2);
}\qquad
\tikzpic{-0.5}{[xscale=0.6]
\draw(0,0)..controls(0,0.5)and(1,0.5)..(1,0);
\draw(2,0)..controls(2,0.5)and(0,1.5)..(0,2);
\draw(3,0)..controls(3,0.5)and(1,1.5)..(1,2);
\draw(2,2)..controls(2,1.5)and(3,1.5)..(3,2);
}\qquad
\tikzpic{-0.5}{[xscale=0.6]
\draw(1,0)..controls(1,0.5)and(2,0.5)..(2,0);
\draw(0,0)--(0,2);
\draw(3,0)..controls(3,0.5)and(1,1.5)..(1,2);
\draw(2,2)..controls(2,1.5)and(3,1.5)..(3,2);
}\qquad
\tikzpic{-0.5}{[xscale=0.6]
\draw(2,0)..controls(2,0.5)and(3,0.5)..(3,0);
\draw(0,0)--(0,2);
\draw(1,0)--(1,2);
\draw(2,2)..controls(2,1.5)and(3,1.5)..(3,2);
}\qquad
\tikzpic{-0.5}{[xscale=0.6]
\draw(0,0)--(0,2);
\draw(1,0)--(1,2)(2,0)--(2,2)(3,0)--(3,2);
}

\caption{Diagrams $g_{4,i}$, $0\le i\le 4$, for type $B$.}
\label{fig:g}
\end{figure}

\begin{defn}
A coefficient of a diagram $h$ in $Q^{(n)}$ is denoted by $\mathrm{Coeff}_{B}^{(n)}(h)$.
\end{defn}

\begin{lemma}
The coefficient of $g_{n,i}$ in $Q^{(n)}$ is given by
\begin{align}
\label{eq:gn}
\mathrm{Coeff}_{B}^{(n)}(g_{n,i})=\genfrac{}{}{}{}{[i]_{s}}{[n]_{s}},
\end{align}
for $0\le i\le n$.
The recurrence relation for $Q^{(n)}$ is reduced to 
\begin{align}
\label{rec:Q2}
Q^{(n+1)}=\genfrac{}{}{}{}{Q^{(n)}}{[n+1]_s}
\left(
\sum_{i=0}^{n+1}[i]_{s}g_{n+1,i}
\right).
\end{align}
\end{lemma}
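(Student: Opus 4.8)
The plan is to prove \eqref{eq:gn} and \eqref{rec:Q2} together by induction on $n$, with the reduced recurrence \eqref{rec:Q2} as the main content and \eqref{eq:gn} read off from it. For the base case $n=1$, Definition \ref{defn:Q} gives $Q^{(1)}=\mathbf 1+[1]_s^{-1}e_0$, which under the identifications $g_{1,1}=\mathbf 1$ and $g_{1,0}=e_0$ yields $\mathrm{Coeff}_B^{(1)}(g_{1,1})=1=[1]_s/[1]_s$ and $\mathrm{Coeff}_B^{(1)}(g_{1,0})=[1]_s^{-1}=[0]_s/[1]_s$. For the inductive step I would first record the staircase identities $g_{n+1,i}=e_ne_{n-1}\cdots e_i$ for $1\le i\le n$, together with $g_{n+1,0}=e_ne_{n-1}\cdots e_1e_0$ and $g_{n+1,n+1}=\mathbf 1$; in particular $g_{n+1,i}=e_n\,g_{n,i}$ for all $0\le i\le n$, where $g_{n,i}$ denotes the corresponding staircase in $\mathcal{TL}^B_n$. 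Since the $i=n+1$ term of \eqref{rec:Q2} contributes precisely $Q^{(n)}$, comparison with Definition \ref{defn:Q} shows that \eqref{rec:Q2} is equivalent to the single identity
\[
[n]_s\,Q^{(n)}e_nQ^{(n)}=Q^{(n)}\sum_{i=0}^{n}[i]_s\,g_{n+1,i}.
\]

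To prove this identity I would expand the rightmost factor as $Q^{(n)}=\sum_D\mathrm{Coeff}_B^{(n)}(D)\,D$ and show that $Q^{(n)}e_nD=0$ for every diagram $D$ that is not one of the $g_{n,i}$. The mechanism is the projector relation $e_jQ^{(n)}=0$ for $0\le j\le n-1$ from \eqref{eq:Qpro}: a diagram $D$ that is neither the identity nor a single cup at the top right has a cup joining two top points at some position $(j,j+1)$ with $j\le n-2$, and this cup is left untouched by $e_n$ (which acts only on strands $n$ and $n+1$), so $Q^{(n)}e_nD$ contains the factor $Q^{(n)}e_j=0$. Conversely, if the top of $D$ is all-through or consists of a single top-right cup, then $D$ has at most one top cup and correspondingly at most one bottom cap, which forces $D$ to be one of the $g_{n,i}$ (with $g_{n,n}=\mathbf 1$ in the all-through case). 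Using the inductive hypothesis $\mathrm{Coeff}_B^{(n)}(g_{n,i})=[i]_s/[n]_s$ and $e_n g_{n,i}=g_{n+1,i}$, this gives $Q^{(n)}e_nQ^{(n)}=[n]_s^{-1}Q^{(n)}\sum_{i=0}^n[i]_s\,g_{n+1,i}$, which is the required identity, and hence \eqref{rec:Q2}.

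The coefficient formula \eqref{eq:gn} is then read off from \eqref{rec:Q2}. In $Q^{(n+1)}=[n+1]_s^{-1}Q^{(n)}\sum_i[i]_s\,g_{n+1,i}$ the bottom cap of $g_{n+1,i}$, sitting at position $i$, is untouched by the left factor $Q^{(n)}$, so a summand can produce $g_{n+1,j}$ only when $i=j$; moreover every non-identity diagram of $Q^{(n)}$ has a top cup at some position $\le n-1$, which remains a top cup of the product and hence cannot be the cup at $(n,n+1)$ required by $g_{n+1,j}$. Thus only the identity component of $Q^{(n)}$ reproduces $g_{n+1,j}$, giving $\mathrm{Coeff}_B^{(n+1)}(g_{n+1,j})=[j]_s/[n+1]_s$ and closing the induction.

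The step I expect to be delicate is the bookkeeping of the dot. Using Remark \ref{remark:dot}, a dot sits only on the outermost cap covering the leftmost strand, so among the diagrams with a single top-right cup the only dotted one occurring in $Q^{(n)}$ is $g_{n,0}$ (the dotted $g_{n,1}$); this is what makes the annihilation argument and the coefficient extraction valid uniformly at $i=0$. The identity $e_n g_{n,0}=g_{n+1,0}$ then holds with no extra scalar, since $e_n$ acts on the rightmost strands and never meets the dot carried by the leftmost factor $e_0$. The remaining $s$-dependence is carried automatically by the quantum integers $[i]_s$, which enter through the inductive hypothesis and the coefficient $[n]_s/[n+1]_s$ of Definition \ref{defn:Q}; the relations $e_0^2=-[1]_se_0$, $e_1e_0e_1=(qs^{-1}+q^{-1}s)e_1$ and the identity $[2][n]_s=[n+1]_s+[n-1]_s$ enter only to keep the dotted reductions consistent.
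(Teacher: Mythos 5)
Your overall strategy coincides with the paper's: reduce \eqref{rec:Q2} to the single identity $[n]_s\,Q^{(n)}e_nQ^{(n)}=Q^{(n)}\sum_{i=0}^{n}[i]_s\,g_{n+1,i}$, prove it by showing $Q^{(n)}e_nD=0$ for every diagram $D$ of $Q^{(n)}$ other than the $g_{n,i}$, and then extract \eqref{eq:gn} by arguing that only the identity term of $Q^{(n)}$ can reproduce a $g_{n+1,j}$ in the product. The base case, the identities $e_n g_{n,i}=g_{n+1,i}$, and the shape of the induction all match the paper (which phrases the annihilation step as a decomposition of the diagram basis into the span $\mathcal{K}_n$ of the $g_{n,i}$ and its complement).

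However, precisely at the step you flagged as delicate --- the dot bookkeeping --- there is a genuine error. You assert (i) that every diagram of $Q^{(n)}$ other than the $g_{n,i}$ has a top cup at a position $(j,j+1)$ with $j\le n-2$, and (ii) that among diagrams with a single top-right cup the only dotted one occurring in $Q^{(n)}$ is $g_{n,0}$. Both claims fail already at $n=2$: the expansion of $Q^{(2)}$ displayed in the paper contains $e_0$ (all-through strands carrying a dot, coefficient $1/[1]_s$), $e_0e_1$ (a dot on the top cup, coefficient $1/[2]_s$), and $e_0e_1e_0$ (coefficient $1/([2]_s[1]_s)$). None of these is a $g_{2,i}$, none has an undotted top cup at position $\le n-2$, and Remark \ref{remark:dot} does not rule them out, since a dotted top cup also folds to an \emph{outermost} cap in the folded diagram. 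Under your stated mechanism these terms are never annihilated, so the key identity --- and with it the coefficient extraction --- is unproven. The repair is the one the paper uses: every diagram $D\notin\{g_{n,i}\}$ factors as $D=e_jh$ for some $0\le j\le n-2$, where the case $j=0$ does real work --- a dot on the strand through the top-left point slides to the top and strips off as a left factor $e_0$ --- and then $Q^{(n)}e_nD=Q^{(n)}e_je_nh=0$, because $e_j$ commutes with $e_n$ for all $0\le j\le n-2$ (including $j=0$) and $Q^{(n)}e_j=0$. The same correction is needed in your extraction step: ``every non-identity diagram of $Q^{(n)}$ has a top cup'' is false for $e_0$; the correct reason $e_0$ does not contribute is that $e_0g_{n+1,j}$ carries a dot on a through strand and hence is not any $g_{n+1,j'}$.
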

\begin{proof}
Let $\mathcal{K}_{n}$ be the linear span of the diagrams $\{g_{n,i}:0\le i\le n\}$, 
and $\mathcal{K}_{n}^{\perp}$ be the linear span of the other diagrams.
A diagram $D$ in $\mathcal{K}_{n}^{\perp}$ can be written as $D=e_{j}h$ with 
some $j\in\{0,n-2\}$ and $h$.
This implies that $Q^{(n)}e_{n}\mathcal{K}_{n}^{\perp}=0$ since $e_{n}e_{j}=e_{j}e_{n}$ and 
$Q^{(n)}e_{j}=0$ for $0\le j\le n-2$.
We write $Q^{(n)}=Q_{\mathcal{K}}^{(n)}+Q_{\mathcal{K}^{\perp}}^{(n)}$ 
with $Q_{\mathcal{K}}^{(n)}\in\mathcal{K}_{n}$ and $Q_{\mathcal{K}^{\perp}}^{(n)}\in\mathcal{K}^{\perp}_{n}$.
Then, we have 
\begin{align*}
Q^{(n)}e_nQ^{(n)}&=Q^{(n)}e_n(Q_{\mathcal{K}}^{(n)}+Q_{\mathcal{K}^{\perp}}^{(n)}), \\
&=Q^{(n)}e_{n}Q_{\mathcal{K}}^{(n)}.
\end{align*}
Since $Q_{\mathcal{K}}^{(n)}$ is spanned by $g_{n,i}$, we have 
\begin{align*}
Q_{\mathcal{K}}^{(n)}=\sum_{i=0}^{n}\mathrm{Coeff}_{B}^{(n)}(g_{n,i})g_{n,i}.
\end{align*}
Since $g_{n,i}$ has a cup at top right, we have $e_{n}g_{n,i}=g_{n+1,i}$ for $0\le i\le n$.
We have 
\begin{align}
\label{eq:recQ}
\begin{split}
Q^{(n+1)}
&=Q^{(n)}
+\genfrac{}{}{}{}{[n]_s}{[n+1]_s}Q^{(n)}e_{n}\sum_{i=0}^{n}\mathrm{Coeff}_{B}^{(n)}(g_{n,i})g_{n,i}, \\
&=Q^{(n)}g_{n+1,n+1}
+\genfrac{}{}{}{}{[n]_s}{[n+1]_s}Q^{(n)}\sum_{i=0}^{n}\mathrm{Coeff}_{B}^{(n)}(g_{n,i})g_{n+1,i},  \\
&=Q^{(n)}\left(
\sum_{i=0}^{n}\genfrac{}{}{}{}{[n]_s}{[n+1]_s}\mathrm{Coeff}_{B}^{(n)}(g_{n,i})g_{n+1,i}+g_{n+1,n+1}
\right).
\end{split}
\end{align}
We prove Eq. (\ref{eq:gn}) by induction on $n$. 
At $n=1$, we have two diagrams $e_{0}=g_{1,0}$, and $\mathbf{1}=g_{1,1}$. 
The coefficients of these diagrams are 
\begin{align*}
\mathrm{Coeff}_{B}^{(1)}(g_{1,0})=\genfrac{}{}{}{}{[0]_s}{[1]_s}, \qquad
\mathrm{Coeff}_{B}^{(1)}(g_{1,1})=1.
\end{align*}
Assume that Eq. (\ref{eq:gn}) holds for up to $n$. From Eq. (\ref{eq:recQ}), we have 
\begin{align}
\label{eq:Qg}
\begin{split}
Q^{(n+1)}&=Q^{(n)}\left(
\sum_{i=0}^{n}\genfrac{}{}{}{}{[n]_s}{[n+1]_s}\genfrac{}{}{}{}{[i]_s}{[n]_s}g_{n+1,i}
+\genfrac{}{}{}{}{[n+1]_s}{[n+1]_s}g_{n+1,n+1}
\right), \\
&=\genfrac{}{}{}{}{Q^{(n)}}{[n+1]_s}\left(\sum_{i=0}^{n+1}[i]_s g_{n+1,i}\right).
\end{split}
\end{align}
We calculate the coefficient of $g_{n+1,i}$ in $Q^{(n+1)}$ by use of the above recurrence 
relation.
For this, we expand $Q^{(n)}$ as $Q^{(n)}=\sum_{h}\mathrm{Coeff}_{B}^{(n)}(h)h$.
If $h$ has a cap or a dotted cap at the bottom edges, $hg_{n+1,i}\neq g_{n+1,j}$
for $0\le j\le n+1$. This is because $g_{n+1,i}$ has a cup at top right, and 
$n-2$ strands which connect top vertices with the bottom vertices.
In the case of $h=1$, we have $hg_{n+1,i}=g_{n+1,i}$.
From Eq. (\ref{eq:Qg}), the coefficient of $g_{n+1,i}$ in $Q^{(n+1)}$ 
is given by $[i]_s/[n+1]_s$ for all $0\le i\le n+1$.
Therefore, we have Eq. (\ref{eq:gn}). 
This completes the proof.
\end{proof}

\begin{prop}
Let $D$ be an $n+1$ strand Temperley--Lieb diagram of type $B$.
Let $\{j\}\in[1,n+1]$ be the set of positions of innermost caps without a dot in $D$.
We define $\{i\}:=\{j\}\cup\{0\}$ if $D$ has a dotted innermost cap which connect 
the left-most bottom point and the point next to it, and $\{i\}:=\{j\}$ otherwise.
Let $D_{i}$, $i\neq0$, be the diagram obtained from $D$ by removing the $i$-th 
innermost cap without a dot, and $D_0$ be the diagram obtained from $D$ by removing 
the left-most dotted inner cap if $0\in\{i\}$.
Then, we have 
\begin{align}
\label{eq:recD}
\mathrm{Coeff}_{B}^{(n+1)}(D)=
\sum_{\{i\}}\genfrac{}{}{}{}{[i]_s}{[n+1]_s}\mathrm{Coeff}_{B}^{(n)}(D_{i}),
\end{align}  
with the initial condition $\mathrm{Coeff}_{B}^{(0)}(\emptyset)=1$.
\end{prop}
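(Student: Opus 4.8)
The plan is to mirror the type $A$ argument from the recurrence relation
(\ref{eq:gn})--(\ref{eq:Qg}) but to track the effect of a dot carefully, since the
dot is the essential new feature of type $B$. The starting point is the reduced
recurrence (\ref{rec:Q2}), namely
$Q^{(n+1)}=\frac{Q^{(n)}}{[n+1]_s}\bigl(\sum_{i=0}^{n+1}[i]_s\,g_{n+1,i}\bigr)$,
together with Remark \ref{remark:dot}, which tells us that any dot in a folded
diagram must sit on an \emph{outer-most} cap. First I would expand $Q^{(n)}$ as
$Q^{(n)}=\sum_{h}\mathrm{Coeff}_{B}^{(n)}(h)\,h$ and compute the product
$h\,g_{n+1,i}$ for each basis diagram $h$. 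As in the type $A$ case, the key is that
$g_{n+1,i}$ carries a cup at its top right and $n-1$ through-strands, so $h\,g_{n+1,i}$
can produce a prescribed target diagram $D$ only when $h$ has no cap (dotted or
undotted) among its \emph{bottom} edges that would obstruct the merge; otherwise the
product lands outside the span of the $g_{n+1,j}$ and contributes nothing to the
coefficient of $D$.

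\textbf{The central bookkeeping step} is to match the action of right-multiplication
by $g_{n+1,i}$ with the combinatorial operation of inserting an innermost cap at
position $i$ into a diagram $D_i$ to recover $D$. For $i\neq 0$ this is exactly the
type $A$ correspondence: the coefficient $[i]_s/[n+1]_s$ in (\ref{eq:gn}) is precisely
the weight picked up when $g_{n+1,i}$ attaches an undotted innermost cap at the $i$-th
bottom position, reproducing $D$ from $D_i$. The genuinely new case is $i=0$. Here I
would use the special diagram $g_{n+1,0}$, defined as $g_{n+1,1}$ with a dot on its
left-most bottom cap, whose coefficient is $[0]_s/[n+1]_s = 1/[n+1]_s$ by
(\ref{eq:gn}). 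The point to verify is that right-multiplication by $g_{n+1,0}$ inserts
precisely a \emph{dotted} innermost cap connecting the left-most bottom vertex to its
neighbour, and that by Remark \ref{remark:dot} such a dotted cap is necessarily
outer-most, hence innermost-on-the-left, which is exactly the configuration singled
out in the definition of $\{i\}=\{j\}\cup\{0\}$. Thus $D_0$ is the unique predecessor
that produces $D$ through the $i=0$ channel.

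\textbf{The step I expect to be the main obstacle} is the careful case analysis of
which basis diagrams $h$ in the expansion of $Q^{(n)}$ actually contribute, and
ensuring that the dotted and undotted contributions do not interfere. Concretely, one
must check (i) that a given target $D$ with an undotted innermost cap at position $i$
receives a contribution only from the pair $(h=\mathbf 1$-type predecessor, $g_{n+1,i})$
matching $D_i$, and (ii) that when $D$ has a dotted left-most innermost cap, the
$i=0$ term contributes with weight $[0]_s/[n+1]_s$ from $D_0$ while no undotted channel
can reproduce that same dot. Here the identities $e_0^2=-[1]_s e_0$ and
$e_1e_0e_1=(qs^{-1}+q^{-1}s)e_1$, which govern how dots collide, guarantee that a second
dot cannot be created spuriously, consistent with Remark \ref{remark:dot}. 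Once this
matching is established diagram-by-diagram, summing over all admissible $i\in\{i\}$
collects exactly the terms $\sum_{\{i\}}\frac{[i]_s}{[n+1]_s}\mathrm{Coeff}_B^{(n)}(D_i)$,
which is (\ref{eq:recD}). The initial condition
$\mathrm{Coeff}_B^{(0)}(\emptyset)=1$ is immediate from $Q^{(0)}=\mathbf 1$, closing
the induction.
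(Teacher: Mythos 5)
Your overall route is the paper's own: start from the reduced recurrence (\ref{rec:Q2}), expand $Q^{(n)}=\sum_{h}\mathrm{Coeff}_B^{(n)}(h)\,h$, and identify right multiplication by $g_{n+1,i}$ ($1\le i\le n+1$) with insertion of an undotted innermost cap at position $i$, and by $g_{n+1,0}$ with insertion of a dotted innermost cap at the leftmost position, each weighted by $[i]_s/[n+1]_s$. The paper's proof consists of exactly these observations (stated even more tersely), so in substance you have reconstructed it.

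However, one assertion in your first paragraph is false and should be struck: the claim that $h\,g_{n+1,i}$ can contribute to the coefficient of $D$ \emph{only when $h$ has no cap (dotted or undotted) among its bottom edges}, the product otherwise landing ``outside the span of the $g_{n+1,j}$''. That condition belongs to the proof of the preceding Lemma (Eq. (\ref{eq:gn})), where the \emph{target} is itself a diagram $g_{n+1,j}$ and one needs $h=\mathbf{1}$. In the present Proposition the target $D$ is arbitrary and generally not in the span of the $g_{n+1,j}$; every basis diagram $h$ contributes --- namely to the coefficient of the diagram $h\,g_{n+1,i}$ --- and the predecessors $D_i$ of a typical $D$ \emph{do} carry caps on their bottom edges (in Example \ref{ex:Dexp} every $D_i$ does). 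Taken literally, your condition would delete almost all legitimate contributions. The correct statement, which your ``central bookkeeping step'' then uses anyway, is: for fixed $i$, one has $h\,g_{n+1,i}=D$ if and only if $D$ has an innermost cap at the relevant position of the matching type (dotted precisely when $i=0$, and then necessarily leftmost, consistent with Remark \ref{remark:dot}) and $h=D_i$. The same confusion resurfaces in your item (i): the phrase ``only from the pair ($h=\mathbf{1}$-type predecessor, $g_{n+1,i}$)'' should read that for each fixed $i$ the unique contributing $h$ is $D_i$, while $D$ receives one such contribution for \emph{each} $i\in\{i\}$, and summing gives (\ref{eq:recD}). A small further remark: the relations $e_0^2=-[1]_se_0$ and $e_1e_0e_1=(qs^{-1}+q^{-1}s)e_1$ are never actually triggered here, since the inserted cap sits below $h$ and the products $h\,g_{n+1,i}$ produce no closed loops and no doubly dotted arcs; so no reduction, rather than a cancellation governed by those relations, is what rules out spurious dots. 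With the flagged sentence corrected, your argument agrees with the paper's proof.
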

\begin{proof}
We consider the recurrence relation (\ref{rec:Q2}). 
We consider the multiplication of two diagrams $h$ and $g_{n+1,i}$ 
where $h$ is the $n$ strand Temperley--Lieb diagram of type $B$.
For $1\le i\le n+1$, the diagram $hg_{n+1,i}$ is obtained from $h$ 
by inserting a innermost cap into $h$ at the $i$-th position.
For $g_{n+1,0}$, $hg_{n+1,0}$ is the diagram obtained from $h$ 
by inserting a innermost dotted cap into $h$ at the first position.
Any diagram $H$ in $Q^{(n+1)}$ is obtained from $h$ and $g_{n+1,i}$
in this way.
From these observations, we have Eq. (\ref{eq:recD}) from Eq. (\ref{rec:Q2}). 
\end{proof}

\begin{example}
\label{ex:Dexp}
We have 
\begin{align*}
\mathrm{Coeff}_{B}^{(4)}\left(
\tikzpic{-0.45}{[xscale=0.5,yscale=0.8]
\draw(0,0)..controls(0,0.5)and(1,0.5)..(1,0);
\draw(2,0)--(2,2)(3,0)--(3,2);
\draw(0,2)..controls(0,1.5)and(1,1.5)..(1,2);
\draw(2,1)node{$\bullet$};
}
\right)
&=\genfrac{}{}{}{}{[1]_s}{[4]_s}\mathrm{Coeff}_{B}^{(3)}\left(
\tikzpic{-0.45}{[xscale=0.5,yscale=0.8]
\draw(0,0)..controls(0,0.5)and(2,1.5)..(2,2);
\draw(0,2)..controls(0,1.5)and(1,1.5)..(1,2);
\draw(1,1)node{$\bullet$};
\draw(1,0)..controls(1,0.5)and(2,0.5)..(2,0);
}
\right)
+\genfrac{}{}{}{}{[4]_s}{[4]_s}\mathrm{Coeff}_{B}^{(3)}\left(
\tikzpic{-0.45}{[xscale=0.5,yscale=0.8]
\draw(0,0)..controls(0,0.5)and(1,0.5)..(1,0);
\draw(2,0)--(2,2);
\draw(0,2)..controls(0,1.5)and(1,1.5)..(1,2);
\draw(2,1)node{$\bullet$};
}
\right), \\
&=\genfrac{}{}{}{}{[1]_s}{[4]_s}\genfrac{}{}{}{}{[2]_s}{[3]_s}\mathrm{Coeff}_{B}^{(2)}\left(
\tikzpic{-0.45}{[xscale=0.5,yscale=0.6]
\draw(0,0)..controls(0,0.5)and(1,0.5)..(1,0);
\draw(0,2)..controls(0,1.5)and(1,1.5)..(1,2);
\draw(0.5,0.35)node{$\bullet$};
}
\right)
+\genfrac{}{}{}{}{[1]_s}{[3]_s}\mathrm{Coeff}_{B}^{(2)}\left(
\tikzpic{-0.45}{[xscale=0.5,yscale=0.6]
\draw(0,0)..controls(0,0.5)and(1,0.5)..(1,0);
\draw(0,2)..controls(0,1.5)and(1,1.5)..(1,2);
\draw(0.5,0.35)node{$\bullet$};
}
\right), \\
&=\left(\genfrac{}{}{}{}{[1]_s}{[4]_s}\genfrac{}{}{}{}{[2]_s}{[3]_s}+\genfrac{}{}{}{}{[1]_s}{[3]_s}\right)
\genfrac{}{}{}{}{1}{[2]_s},\\
&=\genfrac{}{}{}{}{[2][1]_s}{[4]_s[2]_s}.
\end{align*}
\end{example}

\section{Dyck tilings for type \texorpdfstring{$B$}{B}}
\label{sec:DTB}
Let $D$ be an $n$ strand Temperley--Lieb diagram of type $B$. 
From Remark \ref{remark:dot}, $D$ may have a dotted cap which 
is outer-most. 
We define the size $l(c)$ of a cap $c$ by $l(c):=(j-i+1)/2$ if $c$ connects 
the point $i$ with the point $j$ in $D$.
Let $\mu$ be the Dyck path corresponding to the diagram obtained 
from $D$ by removing dots.
We consider Dyck tilings above $\mu$ and below $U^nR^n$. 

As in the case of type $A$, we define the generating function as follows.
\begin{defn}
Let $\mu$ be a Dyck path corresponding to $D$.
\begin{align*}
Z^{B}(\mu):=
\genfrac{(}{)}{}{}{1}{[1]_s}^{N(\bullet)}
\sideset{}{'}{\sum}_{\mathcal{D}\in \mathcal{D}(\mu)}
\prod_{d\in \mathcal{D}}\genfrac{}{}{}{}{[h(d)]_s}{[h(d)+1]_s},
\end{align*}
where the sum $\sideset{}{'}{\sum}$ is taken over all Dyck tilings such that 
there is no Dyck tile of size $l(c)$ above the dotted cap $c$, 
and $N(\bullet)$ is the number of dots in $D$. 
\end{defn}

\begin{example}
\label{ex:Zmu}
Consider the diagram $D$:
\begin{align*}
D:=\tikzpic{-0.3}{[xscale=0.6]
\draw(0,0)..controls(0,0.5)and(1,0.5)..(1,0);
\draw(2,0)..controls(2,1)and(5,1)..(5,0);
\draw(3,0)..controls(3,0.5)and(4,0.5)..(4,0);
\draw(6,0)..controls(6,0.5)and(7,0.5)..(7,0);
\draw(3.5,0.72)node{$\bullet$};
}
\end{align*}
The corresponding Dyck path is $\mu=URUURRUR$.
We have two Dyck tilings 
\begin{align*}
\tikzpic{-0.5}{[scale=0.4]
\draw(0,0)--(4,4)--(8,0)(1,1)--(2,0)--(4,2)--(6,0)--(7,1)(2,2)--(3,1)(3,3)--(4,2)--(5,3)(5,1)--(6,2);
\draw(2.5,0.5)node{$\bullet$}(5.5,0.5)node{$\bullet$};
}\qquad
\tikzpic{-0.5}{[scale=0.4]
\draw(0,0)--(4,4)--(8,0)(1,1)--(2,0)--(4,2)--(6,0)--(7,1)(2,2)--(3,1)(5,1)--(6,2);
\draw(2.5,0.5)node{$\bullet$}(5.5,0.5)node{$\bullet$};
}
\end{align*}
which contribute to the generating function.
On the contrary, the Dyck tiling
\begin{align*}
\tikzpic{-0.5}{[scale=0.4]
\draw(0,0)--(4,4)--(8,0)(1,1)--(2,0)--(4,2)--(6,0)--(7,1);
\draw(2.5,0.5)node{$\bullet$}(5.5,0.5)node{$\bullet$};
}
\end{align*}
is non-admissible since it has a Dyck tile of size $2$ above the dotted cap.

The generating function $Z^{B}(\mu)$ is given by 
\begin{align*}
Z^{B}(\mu)&=\genfrac{}{}{}{}{1}{[1]_s}\left(
\genfrac{}{}{}{}{[3]_s}{[4]_s}\genfrac{}{}{}{}{[2]_s}{[3]_s}\genfrac{}{}{}{}{[2]_s}{[3]_s}
\genfrac{}{}{}{}{[1]_s}{[2]_s}\genfrac{}{}{}{}{[1]_s}{[2]_s}
+\genfrac{}{}{}{}{[2]_s}{[3]_s}\genfrac{}{}{}{}{[1]_s}{[2]_s}\genfrac{}{}{}{}{[1]_s}{[2]_s}
\right), \\
&=\genfrac{}{}{}{}{[2][1]_s}{[4]_s[2]_s}.
\end{align*}
\end{example}

The next theorem is the type $B$ analogue of Theorem \ref{thrm:CDZmu}.
\begin{theorem}
\label{thrm:BDZmu}
Let $D$ be an $n$ strand Temperley--Lieb diagram of type $B$ and $\mu$ be the corresponding Dyck path.
Then, we have 
\begin{align}
\label{eq:BDZmu}
\mathrm{Coeff}_{B}^{(n)}(D)=Z^{B}(\mu).
\end{align}
\end{theorem}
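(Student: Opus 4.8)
The plan is to prove (\ref{eq:BDZmu}) by showing that the type-$B$ generating function $Z^{B}(\mu)$ obeys the very same recurrence (\ref{eq:recD}) as the coefficients $\mathrm{Coeff}_{B}^{(n)}(D)$, together with the matching initial value $Z^{B}(\emptyset)=1$. Since (\ref{eq:recD}) with its initial condition determines the coefficients uniquely, the two quantities must coincide. The induction is on $n$ and proceeds in close parallel with the proof of Theorem \ref{thrm:CDZmu}, so the first task is to transport the insertion/enlargement/box-addition bijection used there to the $s$-deformed setting and then to account for the two features peculiar to type $B$: the dots, and the admissibility restriction in the sum defining $Z^{B}$.

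First I would treat the terms with $i\neq 0$, coming from the innermost caps \emph{without} a dot. Exactly as in type $A$, recovering $\mu$ from $\mu_{i}$ amounts to inserting a subpath $UR$ at position $i$, enlarging by one the Dyck tile sitting there if it exists, and then filling the region below the top path $U^{n+1}R^{n+1}$ with unit boxes. With the tile weight $[h(d)]_{s}/[h(d)+1]_{s}$, the added boxes contribute a telescoping product that reproduces the factor $[i]_{s}/[n+1]_{s}$ of (\ref{eq:recD}), exactly as in the type-$A$ computation. Three points specific to type $B$ must be verified: the number of dots is preserved by this local surgery, so the prefactor $(1/[1]_{s})^{N(\bullet)}$ is common to $\mu$ and $\mu_{i}$ and factors out; the surgery must respect the admissibility constraint, namely that no dotted cap $c$ acquires a tile of size $l(c)$ directly above it; and one must track how the size $l(c)$ of a dotted \emph{outer} cap may change when an innermost cap nested beneath it is removed. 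Because the operations are localized around position $i$ and the dotted caps are outer-most by Remark \ref{remark:dot}, this compatibility is the main thing to check.

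Next I would handle the genuinely new term $i=0$, which occurs precisely when $\mu=UR\mu'$ and the leading size-$1$ cap carries a dot. Removing this dotted cap yields $D_{0}$ with path $\mu_{0}=\mu'$ and one fewer dot, and the recurrence coefficient is $[0]_{s}/[n+1]_{s}=1/[n+1]_{s}$. I expect the bijection between admissible tilings of $\mu$ and of $\mu_{0}$ to be the far-left analogue of the type-$A$ construction, modified by the admissibility constraint, so that the added-box weight combines with the reduction $(1/[1]_{s})^{N(\bullet)}\to(1/[1]_{s})^{N(\bullet)-1}$ of the dot prefactor to produce exactly $[0]_{s}/[n+1]_{s}$. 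The condition that no size-$1$ tile sit above this leading dotted cap is what makes the bijection well defined: it excludes precisely those configurations of $\mu$ that would otherwise have no preimage under removal of the leading cap, so that the distinction between $[0]_{s}$ and $[1]_{s}$ is realized combinatorially by the forbidden size-$l(c)$ tile.

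Assembling the two cases gives $Z^{B}(\mu)=\sum_{\{i\}}\frac{[i]_{s}}{[n+1]_{s}}Z^{B}(\mu_{i})$, which is (\ref{eq:recD}) for the generating function and completes the induction. I expect the principal obstacle to be the bookkeeping of the admissibility constraint: one must confirm both that the box-filling for $i\neq 0$ never inadvertently places a forbidden size-$l(c)$ tile over a dotted cap (including when that cap's size shifts under the surgery), and that in the $i=0$ case the forbidden size-$1$ tile above the leading dotted cap matches exactly the configurations lacking a preimage, so that the restricted sum splits cleanly along the terms of (\ref{eq:recD}). The $s$-deformed telescoping itself is routine given the definition of $[h(d)]_{s}$ and the relation $[2][n]_{s}=[n+1]_{s}+[n-1]_{s}$.
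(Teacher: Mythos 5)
Your proposal takes essentially the same route as the paper's own proof: you show that $Z^{B}(\mu)$ satisfies the type-$B$ recurrence (\ref{eq:recD}) by adapting the insertion/enlargement/box-addition bijection from Theorem \ref{thrm:CDZmu}, handling the dotted cap via the rewriting $[0]_{s}/[n+1]_{s}=\bigl([1]_{s}/[n+1]_{s}\bigr)\bigl(1/[1]_{s}\bigr)$ absorbed into the prefactor $(1/[1]_{s})^{N(\bullet)}$, and matching the restricted sum $\sideset{}{'}{\sum}$ (no tile of size $l(c)$ above a dotted cap $c$) with the prohibition on inserting dotted caps away from the first position --- exactly the two points the paper isolates. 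Your additional bookkeeping checks (preservation of the dot count under the $i\neq 0$ surgery, and the shift of $l(c)$ when a cap nested inside a dotted cap is removed) are finer-grained than the paper's brief treatment but do not alter the argument.
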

\begin{proof}
The proof of Eq. (\ref{eq:BDZmu}) is parallel to the proof of Theorem \ref{thrm:CDZmu}.
The main differences are 1) we may insert a dotted cap at the first position, and 
2) we are not allowed to insert a dotted cap at the position $i$ with $i\ge2$.
As for 1), we rewrite a factor as
\begin{align*}
\genfrac{}{}{}{}{[0]_s}{[n]_s}=\genfrac{}{}{}{}{[1]_s}{[n]_s}\genfrac{}{}{}{}{1}{[1]_s}.
\end{align*}
Note that $[1]_s/[n]_s$ is the factor for the insertion of a cap at the first position.
When the number of dotted cap is $N(\bullet)>0$, 
we have an overall factor $(1/[1]_s)^{N(\bullet)}$ for the generating function.
This is because we insert a dotted cap $N(\bullet)$ times at the first position.

For 2), note that the insertion of a cap corresponds to producing a Dyck tile of size 
$m\ge1$. Since we are not allowed to insert a dotted cap, we are not allowed to 
have a Dyck tile of size $l(c)$ above a dotted cap $c$.
This is realized by the sum $\sideset{}{'}{\sum}$ in $Z^{B}(\mu)$.

As a consequence of above observations, we have Eq. (\ref{eq:BDZmu}).
\end{proof}

\begin{example}
Compare Example \ref{ex:Dexp} with Example \ref{ex:Zmu}.
Theorem \ref{thrm:BDZmu} holds for the diagram.
\end{example}

\bibliographystyle{amsplainhyper} 
\bibliography{biblio}

\end{document}